% !TEX spellcheck = en-US
% !TEX TS-program = latex

\documentclass[titlepage,12pt]{article} 
\usepackage{hyperref}
\usepackage{graphicx}
\usepackage[usenames,dvipsnames]{xcolor}
\usepackage{multirow}
\usepackage{amssymb,amsthm,amsmath} 
\usepackage{mathrsfs}
\usepackage[a4paper]{geometry}
\usepackage{datetime2}
\usepackage[utf8]{inputenc}
\usepackage[italian,english]{babel}

\selectlanguage{english}

%%%%%%%%%%%%%%%%%%%%%%%%%%%%%%%%%%
%                                %
%   Dimensioni pagina standard   %
%                                %
%%%%%%%%%%%%%%%%%%%%%%%%%%%%%%%%%%

\geometry{text={15.7 cm, 23 cm},centering,includefoot}

%\textwidth 15.7cm
%\textheight 21.9cm
%\topmargin -0.6cm
%\oddsidemargin 0cm

%%%%%%%%%%%%%%%%%%%%%%%%%%%%%%%%%%%%%%%%%%%%%%%
%                                             %
%   Versione Finale: alla fine resta questo   %
%                                             %
%%%%%%%%%%%%%%%%%%%%%%%%%%%%%%%%%%%%%%%%%%%%%%%

\date{}

%%%%%%%%%%%%%%%%%%%%%%%%%%%%%%%%%%%%%%%%%%%%%%%%%%%%%%%%%%
%                                                        %
%   Versione di lavoro: commentare per versione finale   %
%                                                        %
%%%%%%%%%%%%%%%%%%%%%%%%%%%%%%%%%%%%%%%%%%%%%%%%%%%%%%%%%%

%\makeatletter
%\renewcommand{\@oddfoot}{\hfill\fbox{\jobname\ -- Release \NumeroVersione\ (\DTMnow) -- Page  \arabic{page} of \pageref{NumeroPagine}}\hfill}
%\makeatother
%
%%\makeatletter
%%\renewcommand{\@oddhead}{\hfill 
%%\fcolorbox{red}{yellow}{Work in progress -- Not ready to be read}
%%\fcolorbox{olive}{green}{Working  meta-stable version.}
%%\hfill}
%%\makeatother
%
%\def\NumeroVersione{2}
%
%\date{Release \NumeroVersione\ (\DTMnow)}

%%%%%%%%%%%%%%%%%%%%%%%%%%%%%%%%%%%%%%%%%%%%%%%%%%%%%%%%%%%%%%%%%
%                                                               %
%   Versione di lavoro: commentare quando si avvicina la fine   %
%                                                               %
%%%%%%%%%%%%%%%%%%%%%%%%%%%%%%%%%%%%%%%%%%%%%%%%%%%%%%%%%%%%%%%%%

%\usepackage{showlabels}
%\usepackage{refcheck}

%\oddsidemargin -1cm

%%%%%%%%%%%%%%%%%%%%%%%%%%%
%                         %
%   Definizioni e macro   %
%                         %
%%%%%%%%%%%%%%%%%%%%%%%%%%%

\newcommand{\ep}{\varepsilon}
\newcommand{\re}{\mathbb{R}}

\newcommand{\ut}{\widehat{u}}
\newcommand{\ul}{u_{\lambda}}
\newcommand{\wl}{w_{\lambda}}
\newcommand{\hl}{h_{\lambda}}
\newcommand{\etal}{\eta_{\lambda}}
\newcommand{\rhol}{\rho_{\lambda}}
\newcommand{\thetal}{\theta_{\lambda}}

%%%%%%%%%%%%%%%%%%%%%%%
%                     %
%   Tipi di teorema   %
%                     %
%%%%%%%%%%%%%%%%%%%%%%%

\newtheorem{thm}{Theorem}[section]

\newtheorem{rmk}[thm]{Remark}
\newtheorem{prop}[thm]{Proposition}

\newtheorem{lemma}[thm]{Lemma}
\newtheorem{open}[thm]{Open problem}

%%%%%%%%%%%%%%%%%%%%%%%%
%                      %
%   Titolo ed autori   %
%                      %
%%%%%%%%%%%%%%%%%%%%%%%%
 
\title{Resonance effects for linear wave equations with scale invariant oscillating damping}

\author{Marina Ghisi\vspace{1ex}\\ 
{\normalsize Università degli Studi di Pisa} \\
{\normalsize Dipartimento di Matematica}\\ 
{\normalsize PISA (Italy)}\\
{\normalsize e-mail: \texttt{marina.ghisi@unipi.it}}
\and
Massimo Gobbino\vspace{1ex}\\ 
{\normalsize Università degli Studi di Pisa} \\
{\normalsize Dipartimento di Matematica}\\ 
{\normalsize PISA (Italy)}\\  
{\normalsize e-mail: \texttt{massimo.gobbino@unipi.it}}
}

%%%%%%%%%%%%%%%%%%%%%%%%%%%%%%%%%%%%%%%%%%%%
%                                          %
%   Abstract, classificazione, key words   %
%                                          %
%%%%%%%%%%%%%%%%%%%%%%%%%%%%%%%%%%%%%%%%%%%%

\begin{document}
\maketitle

\begin{abstract}

We consider an abstract linear wave equation with a time-dependent dissipation that decays at infinity with the so-called scale invariant rate, which represents the critical case. We do not assume that the coefficient of the dissipation term is smooth, and we investigate the effect of its oscillations on the decay rate of solutions.

We prove a decay estimate that holds true regardless of the oscillations. Then we show that oscillations that are too fast have no effect on the decay rate, while oscillations that are in resonance with one of the frequencies of the elastic part can alter the decay rate. 

In the proof we first reduce ourselves to estimating the decay of solutions to a family of ordinary differential equations, then by using polar coordinates we obtain explicit formulae for the energy decay of these solutions, so that in the end the problem is reduced to the analysis of the asymptotic behavior of suitable oscillating integrals.

\vspace{6ex}

\noindent{\bf Mathematics Subject Classification 2020 (MSC2020):} 
35L20, 35L90, 35B40.

%35L20 Initial-boundary value problems for second-order hyperbolic equations
%35L90 Abstract hyperbolic equations
%35B40 - Asymptotic behavior of solutions to PDEs
%35L10 Second-order hyperbolic equations
%35L70 Second-order nonlinear hyperbolic equations
%35L72 Second-order quasilinear hyperbolic equations
%35B65 Smoothness and regularity of solutions
%35D30 Weak solutions to PDEs
		
\vspace{6ex}

\noindent{\bf Key words:} 
abstract wave equation, dissipative wave equation, effective vs non-effective damping, decay rate, resonance, oscillating integral.

\end{abstract}

%%%%%%%%%%%%%%%%%%%%%
%                   %
%   Inizio lavoro   %
%                   %
%%%%%%%%%%%%%%%%%%%%%
 
\section{Introduction}

Let $H$ be a real Hilbert space, and let $A$ be a non-negative self-adjoint operator on $H$ with dense domain $D(A)$. Let $t_{0}$ be a positive real number, and let $b:[t_{0},+\infty)\to[0,+\infty)$ be a function that we call damping coefficient. In this paper we consider the abstract damped wave equation 
\begin{equation}
u''(t)+b(t)u'(t)+Au(t)=0
\qquad
t\geq t_{0},
\label{eqn:main}
\end{equation}
with initial data
\begin{equation}
u(t_{0})=u_{0}\in D(A^{1/2}),
\qquad
u'(t_{0})=u_{1}\in H,
\label{eqn:data}
\end{equation}
and we investigate the effect of the damping coefficient $b(t)$ on the decay rate as $t\to +\infty$ of the classical energy of solutions
\begin{equation}
E_{u}(t)=|u'(t)|^{2}+|A^{1/2}u(t)|^{2}.
\nonumber
\end{equation}

\paragraph{\textmd{\textit{Some heuristics}}}

Thanks to the usual Fourier analysis, it is well-known that equation (\ref{eqn:main}) is equivalent to the family of ordinary differential equations
\begin{equation}
\ul''(t)+b(t)\ul'(t)+\lambda^{2}\ul(t)=0,
\label{ode:eqn}
\end{equation}
where $\lambda$ is a positive real parameter.

Let us assume for a while that $b(t)\equiv b_{0}$ is a positive real constant. In this case solutions to (\ref{ode:eqn}) can be explicitly computed, and two regimes appear.
\begin{itemize}

\item  If $\lambda$ is large with respect to $b_{0}$, and more precisely if $b_{0}^{2}-4\lambda^{2}<0$, then solutions can be written in the form
\begin{equation}
\ul(t)=c_{0}\exp\left(-\frac{b_{0}}{2}t\right)\sin\left(\frac{1}{2}\sqrt{4\lambda^{2}-b_{0}^{2}}\cdot t+\varphi_{0}\right)
\nonumber
\end{equation}
for suitable constants $c_{0}$ and $\varphi_{0}$ that depend on initial data. In particular, solutions oscillate, and the decay of their energy is given by
\begin{equation}
\ul'(t)^{2}+\lambda^{2}\ul(t)^{2}\sim\exp(-b_{0}t)
\qquad
\text{as }t\to +\infty.
\label{decay-ode-h}
\end{equation}

This is the \emph{oscillatory regime} or \emph{hyperbolic regime}, sometimes referred to as \emph{non-effective regime} after the classification introduced in~\cite{2006-JDE-Wirth-NE,2007-JDE-Wirth-E}.

\item  If $\lambda$ is small with respect to $b_{0}$, and more precisely if $b_{0}^{2}-4\lambda^{2}>0$, then solutions can be written in the form
\begin{equation}
\ul(t)=c_{1}\exp(-\mu_{1}t)+c_{2}\exp(-\mu_{2}t)
\nonumber
\end{equation}
where the constants $c_{1}$ and $c_{2}$ depend on initial data, while
\begin{equation}
\mu_{1}:=\frac{b_{0}+\sqrt{b_{0}^{2}-4\lambda^{2}}}{2}
\qquad\text{and}\qquad
\mu_{2}:=\frac{b_{0}-\sqrt{b_{0}^{2}-4\lambda^{2}}}{2}.
\nonumber
\end{equation}

In particular, solutions do not oscillate. Concerning decay rates, we observe that the second term is slower, and therefore the generic solution satisfies
\begin{equation}
\lambda^{2}\ul(t)^{2}\sim
\lambda^{2}\exp(-2\mu_{2}t)\leq
\lambda^{2}\exp\left(-\frac{2\lambda^{2}}{b_{0}}t\right)\leq
\frac{b_{0}}{t}=
\left[\frac{1}{b_{0}}\cdot t\right]^{-1},
\label{decay-ode-pu}
\end{equation}
and
\begin{equation}
\ul'(t)^{2}\sim
\mu_{2}^{2}\exp(-2\mu_{2}t)\leq
\frac{4\lambda^{4}}{b_{0}^{2}}\exp\left(-\frac{2\lambda^{2}}{b_{0}}t\right)\leq
\frac{2}{t^{2}}.
\label{decay-ode-pu'}
\end{equation}

We observe that the two terms in the energy have different decay rates, a feature that is typical of parabolic problems. 

This is the \emph{non-oscillatory regime} or \emph{parabolic regime}, sometimes referred to as  \emph{effective regime} after the classification introduced in~\cite{2006-JDE-Wirth-NE,2007-JDE-Wirth-E}.

\end{itemize}

We observe also that the decay rate in (\ref{decay-ode-h}) is optimal in the sense that all solutions decay with exactly that rate, while the decay rates in (\ref{decay-ode-pu}) and (\ref{decay-ode-pu'}) are optimal only when we consider the generic solution and we make the supremum with respect to $\lambda$.

When the damping coefficient depends on $t$, it is reasonable to expect that something similar happens. In particular, when $\lambda$ is large with respect to $b(t)$ one expects an oscillatory regime where the energy of solutions decays as
\begin{equation}
\exp\left(-\int_{t_{0}}^{t}b(s)\,ds\right),
\label{decay-h}
\end{equation}
while when $\lambda$ is small with respect to $b(t)$ one expects a non-oscillatory regime where the energy of solutions decays as
\begin{equation}
\left[\int_{t_{0}}^{t}\frac{1}{b(s)}\,ds\right]^{-1}.
\label{decay-p}
\end{equation}

Note that (\ref{decay-h}) is decreasing with respect to $b$, namely more damping yields more decay, while (\ref{decay-p}) is increasing, namely more damping yields less decay. The two expressions coincide when $b(t)=2/t$, in which case they provide the maximal decay rate of order $1/t^{2}$. More precisely, when $b(t)$ decays as $m/t$, then the value of the constant $m$ becomes essential, with $m=2$ being the threshold between the hyperbolic regime in which the decay is given by (\ref{decay-h}) and the parabolic regime in which the decay is given by (\ref{decay-p}). For this reason, the case where $b(t)\sim m/t$ represents the critical case.

\paragraph{\textmd{\textit{Previous literature}}}

When considering the wave equation of the form (\ref{eqn:main}), it is reasonable to expect that the decay rate of the energy of its solutions is the worst among the decay rates of its components, namely the minimum between (\ref{decay-h}) and (\ref{decay-p}). 

Results of this type have been proved since the 70s, starting with some of the model cases shown in Table~\ref{table:decay-rates}. Here we just mention the papers~\cite{1977-PJA-Matsumura,1980-Kyoto-Uesaka,2004-M2AS-Wirth} where the case $b(t)=m/t$ was considered, and the case in which $b(t)$ is a positive constant and the parabolic behavior is related to the so-called diffusion phenomenon (see~\cite{1976-Kyoto-Matsumura,1997-JDE-Nishihara,2011-JDE-RaduTodoYord}).

\begin{table}[ht]

\centering 
\setlength{\fboxsep}{1ex}
\setlength{\fboxrule}{0pt}
\begin{tabular}{|c||c|c|c|}
\cline{2-3}
\multicolumn{1}{c|}{} & 
\framebox{\textbf{Damping coefficient}} & \textbf{Decay rate of solutions}
\\
\cline{2-3}\hline
\multirow{5}{*}{\rotatebox{90}{\makebox[0pt]{\textbf{Oscillatory}}}} &
\framebox{$\displaystyle\int_{t_{0}}^{+\infty}b(s)\,ds<+\infty$} & No decay &
\multirow{6}{*}{$\displaystyle\exp\left(-\int_{t_{0}}^{t}b(s)\,ds\right)$}
\\
\cline{2-3}
& \framebox{$\dfrac{1}{t\log t}$} & $\dfrac{1}{\log t}$ &
\\
\cline{2-3}
& \framebox{$\dfrac{m}{t}$\quad (with $m\in(0,2)$)} & \framebox{$\dfrac{1}{t^{m}}$} &
\\
\hline\hline
\multirow{8}{*}{\rotatebox{90}{\makebox[0pt]{\textbf{Non-oscillatory}}}} & 
\framebox{$\dfrac{m}{t}$\quad (with $m\geq 2$)} & \framebox{$\dfrac{1}{t^{2}}$} &
\multirow{8}{*}{$\displaystyle\left[\int_{t_{0}}^{t}\frac{1}{b(s)}\,ds\right]^{-1}$}
\\
\cline{2-3}
& \framebox{$\dfrac{1}{t^{p}}$\quad (with $p\in(-1,1)$)} & $\dfrac{1}{t^{p+1}}$ &
\\
\cline{2-3}
& $t$ & \framebox{$\dfrac{1}{\log t}$} &
\\
\cline{2-3}
& \framebox{$\displaystyle\int_{t_{0}}^{+\infty}\frac{1}{b(s)}\,ds<+\infty$} &  No decay &
\\
\hline
\end{tabular}

\caption{decay rates corresponding to some model damping coefficients}
\label{table:decay-rates}

\end{table}

In the oscillatory regime the decay rates of Table~\ref{table:decay-rates} are optimal in the sense that all solutions decay with that rate, and there is also a scattering theory to solutions of the undamped equation (see~\cite{2004-M2AS-Wirth,2006-JDE-Wirth-NE}). In the non-oscillatory regime, the decay rates are determined at the low frequencies of the spectrum of $A$, and they are optimal in the sense that the square of the norm of the ``energy operator'', namely the quantity
\begin{equation}
\mathcal{E}(t):=\sup\left\{E_{u}(t):(u_{0},u_{1})\in D(A^{1/2})\times H,\ 
|u_{1}|^{2}+|A^{1/2}u_{0}|^{2}+|u_{0}|^{2}\leq 1\right\},
\label{defn:energy-norm}
\end{equation}
decays as prescribed, up to multiplicative constants.

In the last 20 years, starting with the papers~\cite{2006-JDE-Wirth-NE,2007-JDE-Wirth-E}, the results for the model cases have been progressively extended to more general classes of damping coefficients. This extension turned out to be a difficult problem, on which the progress is rather slow (see for example~\cite{2008-Hiroshima-Wirth,2008-JMAA-HirosawaWirth,2017-JMAA-Wirth,2021-JMAA-VargasDaLuz,2021-AsymptAn-Yamazaki,2023-AsymptAn-AslanEbert,2023-JEE-Sobajima}). 

As far as we know, almost all the results so far involve the following two types of assumptions on the damping coefficient.
\begin{itemize}

\item  Assumptions that control the effective or non-effective nature of the damping, namely prescribing on which side of the threshold $2/t$ the damping coefficient lies, so that it is clear which is smaller between (\ref{decay-h}) and (\ref{decay-p}). These assumptions usually involve the behavior of $t\cdot b(t)$ as $t\to +\infty$, the typical example being requiring that the limit is $+\infty$ for the effective regime, or that the limsup is strictly less than $2$ (and sometimes even less than $1$) for the non-effective regime.

\item  Assumptions that control the oscillations of the damping coefficient. These assumptions usually require that $b(t)$ is monotone and/or that its first derivative (and sometimes also some higher order derivatives) decays fast enough, or more generally that $b(t)=b_{1}(t)+b_{2}(t)$, where $b_{1}(t)$ is ``well-behaved'' in the previous sense, and $b_{2}(t)$, which carries the oscillations, is a lower order term with suitable integrability or stabilization properties.

\end{itemize}

Roughly speaking, the leitmotiv is that the result becomes more and more difficult both when the damping coefficient approaches the threshold $2/t$, and when large or fast oscillations are allowed. We refer to the introduction of the recent paper~\cite{2023-AsymptAn-AslanEbert} for a good summary of the previous literature. Here we limit ourselves to quoting four examples that have been considered in the past, and that could be useful for a better comparison with our results.

\begin{enumerate}

\item  (\cite[Example~3.1]{2008-JMAA-HirosawaWirth}) Solutions decay as prescribed by (\ref{decay-h}), namely as $1/t^{m}$, when
\begin{equation}
b(t):=\frac{m(1+\sin(t^{\alpha}))}{t}
\qquad\quad%\text{with }
m\in(0,1/2),
\quad
\alpha\in(0,1).
\label{hp:slow}
\end{equation}

In this case the damping coefficient falls into the non-effective and scale invariant regime (and actually it is far from the threshold $2/t$). Its oscillations have the same order as the principal part, but they are ``slow'' because $\alpha<1$.

\item  (\cite[Example~1]{2023-AsymptAn-AslanEbert}) Solutions decay as prescribed by (\ref{decay-h}), namely as $1/t^{m}$, when
\begin{equation}
\frac{m}{t}-\frac{1}{t\log^{\gamma}t}\leq b(t)\leq\frac{m}{t}+\frac{1}{t\log^{\gamma}t}
\qquad\quad%\text{with }
m\in(0,2),
\quad
\gamma>1.
\label{hp:abs-int}
\end{equation}

Also in this case the damping coefficient falls into the non-effective and scale invariant regime. Oscillations can be very fast, but they are a lower order term and, more important, this term is \emph{absolutely} integrable at infinity because of the condition $\gamma>1$.

\item  (\cite[Example~3]{2023-AsymptAn-AslanEbert}) Solutions decay as prescribed by (\ref{decay-p}), namely as $1/t^{2}$, when
\begin{equation}
\frac{m-r}{t}\leq b(t)\leq\frac{m+r}{t}
\qquad\quad%\text{with }
m>2,
\quad
r\ll m-2.
\nonumber
\end{equation}

In this case the damping coefficient falls into the effective and scale invariant regime. Fast oscillations of the same order are allowed, but their amplitude is required to be small.

\item  (\cite[Theorem~2.1]{2021-JMAA-VargasDaLuz}) Solutions decay as prescribed by (\ref{decay-p}), namely as $1/t^{p+1}$, when
\begin{equation}
\frac{m}{t^{p}}\leq b(t)\leq\frac{M}{t^{p}}
\qquad\quad%\text{with }
0<m\leq M,
\quad
p\in(-1,1).
\nonumber
\end{equation}

In this case oscillations are allowed to be fast and large, but $t\cdot b(t)\to +\infty$ and therefore we are not in the scale invariant regime.

\end{enumerate}

\paragraph{\textmd{\textit{Our contribution}}}

The aim of this paper is to consider equation (\ref{eqn:main}) with damping coefficients that decay at infinity with a scale invariant rate proportional to $1/t$, but neither lie on one precise side of the threshold $2/t$ between the effective and non-effective regime, nor satisfy regularity assumptions that limit their oscillations.

In the first result (see Theorem~\ref{thm:decay}) we consider any measurable damping coefficient that lies in between $m/t$ and $M/t$ for suitable constants $M\geq m> 0$. We prove that the decay rate of solutions is at least the worst between the rates prescribed by Table~\ref{table:decay-rates} in the two extreme cases $m/t$ and $M/t$, despite the potentially wild oscillations. We stress that we do not assume that $M$ and $m$ are on the same side with respect to 2. However, even in the special case where $b(t)$ lies in the effective regime, this result improves what was previously known (see Remark~\ref{rmk:improvement-p}). We suspect that a similar paradigm applies to larger ranges of oscillation, in the sense that whenever $b_{1}(t)\leq b(t)\leq b_{2}(t)$, where $b_{1}(t)$ and $b_{2}(t)$ are two well-behaved coefficients (for example those in Table~\ref{table:decay-rates}), then the decay rate of the energy of solutions to (\ref{eqn:main}) is at least the worst between the decay rates corresponding to $b_{1}(t)$ and $b_{2}(t)$ (see Open Problem~\ref{open}).

Then we focus on two examples that shed some light on the role of oscillations. In the second result (see Theorem~\ref{thm:alpha}) we consider a damping coefficient of the form
\begin{equation}
b(t):=\frac{a+r\sin(t^{\alpha})}{t}
\qquad
\forall t>0,
\label{defn:b-hyp-alpha}
\end{equation}
with $a\geq r>0$ and $\alpha>1$, and we show that the decay rate of solutions coincides with the one prescribed by Table~\ref{table:decay-rates} for $b(t)=a/t$. Roughly speaking, this suggests that the oscillations of the coefficient are too fast, so that some homogenization effect takes place in such a way that solutions do not see these oscillations. We recall that a similar phenomenon had already been observed in the case where $\alpha<1$, but in that case the oscillations were ineffective because they were too slow.

Finally, in the third result (see Theorem~\ref{thm:resonance}) we show the existence of a damping coefficient with a scale invariant behavior for which equation (\ref{eqn:main}) admits solutions that do not decay according to (\ref{decay-h}) or (\ref{decay-p}), but \emph{more slowly}. The construction of this damping coefficient is rather implicit, but a careful inspection of the argument reveals that it has a form similar to (\ref{defn:b-hyp-alpha}) with $\alpha=1$. The key point is that the oscillations of this damping coefficient have the same ``period'' as the solutions of the undamped version of (\ref{ode:eqn}) with a specific value of $\lambda$. This induces a \emph{resonance effect} between the free oscillations and the damping coefficient, and this resonance effect deteriorates the decay rate of the components of the solution corresponding to frequencies close to that specific value of $\lambda$.

As far as we know, this is the first example were solutions do not decay according to (\ref{decay-h}) or (\ref{decay-p}). A posteriori it justifies the difficulty in extending the results of Table~\ref{table:decay-rates} to less regular damping coefficients. Now we know that the extension is in general false, and for example the absolute integrability condition that appears in (\ref{hp:abs-int}) can not be replaced by simple integrability.

\paragraph{\textmd{\textit{Overview of the technique}}}

First of all, using Fourier analysis we reduce ourselves to proving $\lambda$-independent decay estimates for solutions to (\ref{ode:eqn}). To this end, we observe that the pair $(\ul(t),\ul'(t))$ can be written in the form
\begin{equation}
\ul(t)=\frac{1}{\lambda}\rhol(t)\cos(\thetal(t)),
\qquad\qquad
\ul'(t)=-\rhol(t)\sin(\thetal(t)),
\label{defn:polar}
\end{equation}
where $\rhol:[t_{0},+\infty)\to(0,+\infty)$ and $\thetal:[t_{0},+\infty)\to\re$ are solutions to the system of ordinary differential equations
\begin{eqnarray}
\rhol'(t) & = & -\rhol(t)b(t)\sin^{2}(\thetal(t))
\label{eqn:rho}
\\[0.5ex]
\thetal'(t) & = & \lambda-\frac{1}{2}b(t)\sin(2\thetal(t)).
\label{eqn:theta}
\end{eqnarray}

From the first equation it follows that the energy of the solution, namely the quantity
\begin{equation}
\rhol(t)^{2}=\ul'(t)^{2}+\lambda^{2}\ul(t)^{2},
\label{defn:rho}
\end{equation}
is given by
\begin{equation}
\rhol(t)^{2}=\rhol(t_{0})^{2}
\exp\left(-2\int_{t_{0}}^{t}b(s)\sin^{2}(\thetal(s))\,ds\right)
\qquad
\forall t\geq t_{0}.
\label{est:rho2}
\end{equation}

Now assume that $\lambda$ is large with respect to $b(t)$, which is always the case, at least for $t$ large enough, whenever $b(t)\to 0$ as $t\to +\infty$. In this hyperbolic regime, from equation (\ref{eqn:theta}) we can expect that $\thetal(t)\sim\lambda t$ and therefore it is reasonable to approximate the argument of the exponential in (\ref{est:rho2}) as
\begin{equation}
-2\int_{t_{0}}^{t}b(s)\sin^{2}(\thetal(s))\,ds\sim
-2\int_{t_{0}}^{t}b(s)\sin^{2}(\lambda s)\,ds.
\nonumber
\end{equation}

In this way the problem is reduced to estimating an oscillating integral, in which the oscillations of $b(s)$ might interact with the oscillations of $\sin^{2}(\lambda s)$. At this point three possible scenarios appear.
\begin{itemize}

\item  If the oscillations of $b(s)$ are ``slow'' compared with the oscillations of $\sin^{2}(\lambda s)$, then it is reasonable to replace the trigonometric term by its time-average, which is equal to 1/2. In this way we obtain that
\begin{equation}
-2\int_{t_{0}}^{t}b(s)\sin^{2}(\thetal(s))\,ds\sim
-\int_{t_{0}}^{t}b(s)\,ds,
\nonumber
\end{equation}
and therefore from (\ref{est:rho2}) we deduce that solutions decay as prescribed by (\ref{decay-h}).  

\item  If $b(s)$ contains terms whose oscillations are ``fast'' compared with the ones of $\sin^{2}(\lambda s)$, then these fast oscillations can be replaced by their time-average. For example, when $b(t)$ is given by (\ref{defn:b-hyp-alpha}), the term $\sin(s^{\alpha})$ oscillates faster because of the condition $\alpha>1$, and therefore it can be replaced by its time-average, which is equal to 0. Therefore, in this case we obtain that
\begin{equation}
-2\int_{t_{0}}^{t}b(s)\sin^{2}(\thetal(s))\,ds\sim
a\log\left(\frac{t_{0}}{t}\right)\sim
-\int_{t_{0}}^{t}b(s)\,ds,
\nonumber
\end{equation}
up to lower order terms, which again justifies an energy decay of the form (\ref{decay-h}).

\item  If $b(s)$ contains terms that oscillate as $\sin^{2}(\lambda s)$, then things are different. For example, in the proof of Theorem~\ref{thm:resonance} we construct a damping coefficient similar to (\ref{defn:b-hyp-alpha}), but with the term $\sin(t^{\alpha})$ replaced by something that behaves as $\cos(2\lambda t)=1-2\sin^{2}(\lambda t)$. With this choice we obtain that
\begin{equation}
-\int_{t_{0}}^{t}b(s)\,ds\sim 
-\int_{t_{0}}^{t}\frac{a+r\cos(2\lambda s)}{s}\,ds\sim
a\log\left(\frac{t_{0}}{t}\right),
\label{int1}
\end{equation}
and
\begin{equation}
-2\int_{t_{0}}^{t}b(s)\sin^{2}(\thetal(s))\,ds\sim
a\log\left(\frac{t_{0}}{t}\right)
-2r\int_{t_{0}}^{t}\frac{(1-2\sin^{2}(\lambda s))\sin^{2}(\lambda s)}{s}\,ds,
\label{int2}
\end{equation}
but now the last integral diverges with the same order of the first term, and therefore it can no longer be neglected. As a consequence, the exponentials of (\ref{int1}) and (\ref{int2}) have different orders, and hence the decay rate given by (\ref{est:rho2}) does not coincide with (\ref{decay-h}). 

\end{itemize}

Replacing oscillating integrals with their time-averages is the rough idea behind the proof of our main results.  Of course, a formal proof has to justify rigorously all the approximations, which we do in Propositions~\ref{prop:hyperbolic} and~\ref{prop:hyp-alpha}. More important, we need to consider also the parabolic regime in which $b(t)$ is large with respect to $\lambda$. We deal with this regime in Proposition~\ref{prop:parabolic}, where we use different (and somewhat more elementary) energy estimates, the main idea being that the parabolic regime applies just in a ``short'' time interval.

\paragraph{\textmd{\textit{Resonance effects in different models}}}

We conclude by mentioning some analogies with apparently different problems.

In~\cite{GGH-2016-SIAM} we considered again equation (\ref{eqn:main}), with an operator $A$ whose spectrum is either a finite set or an increasing sequence of positive real numbers (this assumption rules out the issue of low frequencies). Our aim was designing the damping coefficient $b(t)$ in such a way that all solutions to  (\ref{eqn:main}) decay as fast as possible.  We discovered that the best choice is a ``pulsating coefficient'' that alternates small and large values with a frequency that depends on the eigenvalues of $A$. In that model the resonance was exploited in order to produce a fast decay; here we exploit it in order to produce a decay that is slower than expected.

In~\cite{GGH-2019-Quantization} we considered a wave equation with a non-linear non-local damping, and we studied the asymptotic behavior of solutions. Again the key tool was the polar representation of solutions in the form (\ref{defn:polar}), which again led to the study of oscillating integrals, similar to the ones that appear in this paper, where some terms could be approximated by their time-averages.

Finally, we can not conclude without mentioning the related problem where the time-dependent coefficient is in front of the elastic term, namely the abstract wave equation
\begin{equation}
w''(t)+c(t)Aw(t)=0,
\nonumber
\end{equation}
and the corresponding family of ordinary differential equations
\begin{equation}
\wl''(t)+\lambda^{2}c(t)\wl(t)=0.
\label{ode:c(t)}
\end{equation}

It is well-known that, when $c(t)$ is a positive constant, the energy of solutions remains constant in time. On the contrary, when $c(t)$ is allowed to oscillate between two positive constants, then (\ref{ode:c(t)}) admits solutions that grow exponentially in time, the classical example being the case where
\begin{equation*}
c(t):=1-8\ep\sin(2\lambda t)-16\ep^{2}\sin^{4}(\lambda t),
\qquad
\wl(t):=\sin(\lambda t)\exp\left(2\ep\lambda t-\ep\sin(2\lambda t)\strut\right)
%\label{eqn:DGCDS}
\end{equation*}
for some small enough $\ep>0$. The existence of this anomalous growth was discovered in the seminal paper~\cite{DGCS}, and it has a lot of consequences both in terms of non-existence of solutions when the propagation speed and/or initial data are not regular enough (this problem has been intensively studied after~\cite{DGCS}, for more details we refer to the recent paper~\cite{2023-MathAnn-GG} and to the references quoted therein), and in terms of lack of the so-called generalized energy conservation when everything is smooth (see for example~\cite{2007-MathAnn-Hirosawa,2015-JMAA-EbeFitHir} and the references quoted therein).

Here we just point out that in the example mentioned above the time-dependent coefficient $c(t)$ and the solution $\wl(t)$ oscillate with the same period, and it is again a resonance effect that triggers the exponential growth of the energy.

\paragraph{\textmd{\textit{Structure of the paper}}}

This paper is organized as follows. In section~\ref{sec:statements} we fix the functional setting, and we state our results concerning the decay rate of solutions to (\ref{eqn:main}) and (\ref{ode:eqn}). In section~\ref{sec:osc-int}, which is the technical core of this paper, we study the convergence of some oscillating integrals. In section~\ref{sec:ODEs} we prove the key estimates for solutions of the family of ordinary differential  equations (\ref{ode:eqn}). Finally, in section~\ref{sec:PDEs} we prove the main results. 

%\clearpage

\setcounter{equation}{0}
\section{Statements}\label{sec:statements}

\paragraph{\textmd{\textit{Functional setting}}}

In this paper we assume that $H$ is a real Hilbert space and $A$ is a linear operator on $H$ with domain $D(A)$. We always assume that $A$ is unitary equivalent to a non-negative multiplication operator in some $L^{2}$ space. More precisely, we assume that there exist a measure space $(\mathcal{M},\mu)$, a measurable function $\lambda:\mathcal{M}\to[0,+\infty)$, and a linear isometry $\mathscr{F}:H\to L^{2}(\mathcal{M},\mu)$ with the property that for every $u\in H$ it turns out that
\begin{equation}
u\in D(A)
\quad\Longleftrightarrow\quad
\lambda(\xi)^{2}[\mathscr{F}u](\xi)\in L^{2}(\mathcal{M},\mu),
\nonumber
\end{equation}
and for every $u\in D(A)$ it turns out that
\begin{equation}
\left[\mathscr{F}(Au)\right](\xi)=
\lambda(\xi)^{2}[\mathscr{F}u](\xi)
\qquad
\forall\xi\in\mathcal{M}. 
\nonumber
\end{equation}

Roughly speaking, $\mathscr{F}$ is a sort of generalized Fourier transform that allows to identify every element $u\in H$ with a function $\ut\in L^{2}(\mathcal{M},\mu)$, and under this identification the operator $A$ becomes the multiplication operator by $\lambda(\xi)^{2}$ in $L^{2}(\mathcal{M},\mu)$. In particular, under this identification it turns out that
\begin{equation}
|u|^{2}_{H}=
\|\ut\|^{2}_{L^{2}(\mathcal{M},\mu)}=
\int_{\mathcal{M}}\ut(\xi)^{2}\,d\xi,
\label{defn:norm-xi}
\end{equation}
and more generally
\begin{equation}
|A^{\alpha}u|^{2}_{H}=
\int_{\mathcal{M}}\lambda(\xi)^{4\alpha}\cdot\ut(\xi)^{2}\,d\xi
\qquad
\forall\alpha>0,
\quad
\forall u\in D(A^{\alpha}),
\label{defn:A-alpha}
\end{equation}
where $D(A^{\alpha})$ is defined as the set of vectors $u\in H$ for which the integral in the right-hand side is finite.

At this point problem (\ref{eqn:main})--(\ref{eqn:data}) can be solved by considering, for every $\xi\in\mathcal{M}$, the function $\ut(t,\xi)$ that solves the ordinary differential equation
\begin{equation}
\ut\,''(t,\xi)+b(t)\ut\,'(t,\xi)+\lambda(\xi)^{2}\ut(t,\xi)=0
\label{eqn:uxi}
\end{equation}
(here ``primes'' denote derivatives with respect to time), with initial data
\begin{equation}
\ut(t_{0},\xi)=[\mathscr{F}u_{0}](\xi),
\qquad\qquad
\ut\,'(t_{0},\xi)=[\mathscr{F}u_{1}](\xi),
\label{data:xi}
\end{equation}
and finally setting $u(t):=\mathscr{F}^{-1}\ut(t,\xi)$. In this way one obtains that, if $b\in L^{1}((t_{0},T))$ for every $T>t_{0}$, then for every pair of initial data (\ref{eqn:data}) equation (\ref{eqn:main}) has a unique solution 
$$u\in C^{0}\left([t_{0},+\infty),D(A^{1/2})\right)\cap C^{1}\left([t_{0},+\infty),H\strut\right).$$

%\clearpage

\paragraph{\textmd{\textit{Main results}}}

Our first result concerns a non-regular damping coefficient that oscillates between two ``well-behaved'' scale invariant coefficients.

\begin{thm}[General oscillations]\label{thm:decay}

Let $H$ and $A$ be as in the functional setting described at the beginning of this section. 
Let $t_{0}$ be a positive real number, and let $b:[t_{0},+\infty)\to\re$ be a measurable function.

Let us assume that there exist two real numbers $M\geq m>0$ such that
\begin{equation}
\frac{m}{t}\leq b(t)\leq\frac{M}{t}
\qquad
\forall t\geq t_{0},
\label{hp:bound-m1m2}
\end{equation}
and let us set
\begin{equation}
\mu:=\min\{m,2\}.
\label{defn:mu}
\end{equation}

Then every solution to problem (\ref{eqn:main})--(\ref{eqn:data}) satisfies the decay estimate
\begin{equation}
|u'(t)|^{2}+|A^{1/2}u(t)|^{2}\leq
e^{m(M+8)}
\left(4|u_{1}|^{2}+|A^{1/2}u_{0}|^{2}+\frac{2}{t_{0}^{2}}|u_{0}|^{2}\right)
\left(\frac{t_{0}}{t}\right)^{\mu}
\label{th:main-decay}
\end{equation}
for every $t\geq t_{0}$.

\end{thm}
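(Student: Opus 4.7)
The plan is to combine the spectral reduction of Section~\ref{sec:statements} with a frequency-by-frequency analysis of (\ref{ode:eqn}), so that (\ref{th:main-decay}) follows, after integration against the spectral measure via (\ref{defn:norm-xi})--(\ref{defn:A-alpha}), from a uniform pointwise bound
\[
u_\lambda'(t)^2 + \lambda^2 u_\lambda(t)^2 \le e^{m(M+8)}\left(\frac{t_0}{t}\right)^{\!\mu}\!\left(4 u_\lambda'(t_0)^2 + \lambda^2 u_\lambda(t_0)^2 + \frac{2}{t_0^2} u_\lambda(t_0)^2\right)
\]
for every $\lambda \ge 0$. The shape of $\mu = \min\{m,2\}$ in (\ref{defn:mu}) suggests that two different mechanisms will act as the bottleneck according to whether $\lambda$ is small or large relative to $b(t) \sim 1/t$, and to separate them I would introduce the threshold $t^\ast_\lambda := \max\{t_0,\, M/(2\lambda)\}$, which is chosen so that $b(t) < 2\lambda$ for every $t > t^\ast_\lambda$.

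On the \emph{parabolic} interval $[t_0, t^\ast_\lambda]$, which is non-empty only for small $\lambda$, I would use a Lyapunov-type functional built from $u_\lambda'^2 + \lambda^2 u_\lambda^2$ augmented by cross-terms in $b(t)\,u_\lambda u_\lambda'$ and in $u_\lambda^2/t^2$; differentiating and using only $b(t) \ge m/t$ produces an ODI whose integration gives a bound on the energy at $t^\ast_\lambda$ of order $(t_0/t^\ast_\lambda)^2$ up to a multiplicative constant. This is the content of Proposition~\ref{prop:parabolic}, and it is here that the extra $u_0^2/t_0^2$ summand in (\ref{th:main-decay}) is generated, since at low frequencies $u_\lambda^2/t_0^2$ is the natural quantity to measure at $t_0$ rather than $\lambda^2 u_\lambda^2$.

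On the \emph{hyperbolic} interval $[t^\ast_\lambda, t]$ I would switch to the polar coordinates (\ref{defn:polar}) and exploit the explicit formula (\ref{est:rho2}). Using $2\sin^2(\theta_\lambda) = 1 - \cos(2\theta_\lambda)$, the exponent in (\ref{est:rho2}) splits as
\[
-2\int_{t^\ast_\lambda}^{t} b(s)\sin^2(\theta_\lambda(s))\,ds = -\int_{t^\ast_\lambda}^{t} b(s)\,ds + \int_{t^\ast_\lambda}^{t} b(s)\cos(2\theta_\lambda(s))\,ds.
\]
The first integral is at most $-m\log(t/t^\ast_\lambda)$ by the lower bound in (\ref{hp:bound-m1m2}), yielding a factor $(t^\ast_\lambda/t)^m$. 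For the second integral, the choice of $t^\ast_\lambda$ guarantees via (\ref{eqn:theta}) that $\theta_\lambda'(s) \ge \lambda/2$, so $\cos(2\theta_\lambda(s))\,ds$ is proportional to $d\sin(2\theta_\lambda(s))$ and one can appeal to the oscillating-integral estimates of Section~\ref{sec:osc-int} (packaged at the ODE level as Proposition~\ref{prop:hyperbolic}) to obtain a bound depending only on $m$ and $M$. Multiplying the parabolic and hyperbolic factors and optimizing in $t^\ast_\lambda$ then produces the exponent $\mu$: the hyperbolic factor $(t^\ast_\lambda/t)^m$ is the binding constraint when $m \le 2$, while the parabolic cap at $1/t^2$ takes over when $m > 2$.

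The main obstacle is the oscillating-integral bound in the hyperbolic step. Since $b$ is only measurable, the textbook integration by parts that puts a derivative on $b$ is unavailable, and all derivatives must instead be transferred onto $\sin(2\theta_\lambda)$ using the ODE (\ref{eqn:theta}) itself. The resulting remainders are of the form $b(s)^2$ times a bounded trigonometric factor and are integrable in $s$ precisely thanks to the upper bound $b \le M/t$, which is why $M$ enters the constant $e^{m(M+8)}$ in (\ref{th:main-decay}) even though the exponent $\mu$ depends only on $m$. Matching that constant explicitly forces a careful bookkeeping of the intermediate remainders in both regimes, but conceptually the proof amounts to the split at $t^\ast_\lambda$ followed by the two regime-specific estimates.
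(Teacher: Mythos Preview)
Your overall architecture---spectral reduction to (\ref{ode:eqn}), a frequency-dependent split into a short parabolic interval and a long hyperbolic tail, polar coordinates plus an oscillating-integral bound on the tail---is exactly the one the paper uses, and your identification of the hyperbolic obstacle (no derivative of $b$ available, so integrate by parts against $\sin(2\theta_\lambda)$ via (\ref{eqn:theta})) matches how Proposition~\ref{prop:hyperbolic} is actually proved through Lemma~\ref{lemma:osc-int}. The paper uses the threshold $1/\lambda$ rather than your $M/(2\lambda)$, but this is cosmetic.

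Where you diverge is the parabolic step. You propose a Lyapunov functional with a cross-term in $b(t)\,u_\lambda u_\lambda'$; differentiating such a term produces $b'(t)$, which runs into the very regularity obstruction you correctly flag in the hyperbolic regime. The paper's Proposition~\ref{prop:parabolic} sidesteps this entirely by a more elementary device: it splits the solution as $u_\lambda=u_{\lambda,1}+u_{\lambda,2}$ according to initial position and initial velocity. The piece $u_{\lambda,1}$ is handled by monotonicity of the energy alone. The piece $u_{\lambda,2}$ keeps a definite sign of $u_{\lambda,2}'$ up to a first zero $t_1$, which turns (\ref{ode:eqn}) into the first-order differential inequality $u_{\lambda,2}''+b(t)u_{\lambda,2}'\le 0$, whose integration only uses $b\ge m/t$ and never touches $b'$. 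This is both simpler and robust to measurable $b$; it also explains the precise constants in (\ref{th:main-decay}), in particular the factor $4$ in front of $|u_1|^2$ and the term $2|u_0|^2/t_0^2$, which come directly from the splitting and the bound $\lambda\le 1/t$ on the parabolic interval via (\ref{th:gamma}). If you keep your Lyapunov idea, you would need to replace the $b(t)$ weight in the cross-term by an explicit $1/t$ weight to avoid differentiating $b$.
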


\begin{rmk}[Better decay for coercive operators]
\begin{em}

The decay estimate (\ref{th:main-decay}) is optimal because it is optimal when $b(t)=m/t$. However, we recall that in the effective regime, namely when $m>2$, the optimality is determined only at low frequencies, and what actually decays as $1/t^{2}$ is the quantity defined in (\ref{defn:energy-norm}). 

Things are different if the operator $A$ is coercive, namely if there exists $\lambda_{0}>0$ such that $|Au|\geq\lambda_{0}^{2}|u|$ for every $u\in D(A)$. In this case, under the same assumptions of Theorem~\ref{thm:decay}, all solutions satisfy
\begin{equation}
|u'(t)|^{2}+|A^{1/2}u(t)|^{2}\leq
\exp\left(\frac{m(M+8)}{\lambda_{0}t_{0}}\right)
\left(|u_{1}|^{2}+|A^{1/2}u_{0}|^{2}\right)
\left(\frac{t_{0}}{t}\right)^{m}
\hspace{1.5em}
\forall t\geq t_{0},
\nonumber
\end{equation}
namely all solutions decay with at least the hyperbolic rate $1/t^{m}$, even if $m>2$ (see Proposition~\ref{prop:hyperbolic}).

\end{em}
\end{rmk}

We suspect that, in the case $m<2$, estimate (\ref{th:main-decay}) might be true even if we allow much larger oscillations. More precisely, for the time being we have no counterexamples to the following question (note that in (\ref{hp:bound-open}) the damping coefficient is allowed to oscillate between two coefficients that yield the same decay rate of solutions according to Table~\ref{table:decay-rates}).

\begin{open}\label{open}
\begin{em}

Let $t_{0}$, $m$, $M$ be positive real numbers, with $m\in(0,2)$ and $M\geq mt_{0}^{m-2}$. Let $b:[t_{0},+\infty)\to\re$ be a measurable function such that
\begin{equation}
\frac{m}{t}\leq b(t)\leq\frac{M}{t^{m-1}}
\qquad
\forall t\geq t_{0}.
\label{hp:bound-open}
\end{equation}

Determine whether there exists a constant $\Gamma_{1}$, possibly depending on $t_{0}$, $m$, $M$, such that every solution to problem (\ref{eqn:main})--(\ref{eqn:data}) satisfies
\begin{equation}
|u'(t)|^{2}+|A^{1/2}u(t)|^{2}\leq
\Gamma_{1}\left(|u_{1}|^{2}+|A^{1/2}u_{0}|^{2}+|u_{0}|^{2}\right)
\left(\frac{t_{0}}{t}\right)^{m}
\qquad
\forall t\geq t_{0}.
\nonumber
\end{equation}

\end{em}
\end{open}

Our second main result concerns a damping coefficient with very fast oscillations.

\begin{thm}[Fast oscillations]\label{thm:alpha}

Let $H$ and $A$ be as in the functional setting described at the beginning of this section. 
Let us consider the damping coefficient $b(t)$ defined by (\ref{defn:b-hyp-alpha}), where $a$, $r$, $\alpha$ are three real numbers such that 
%Let $a$, $r$, $\alpha$ be three real numbers such that 
\begin{equation}
a>0,
\qquad\qquad
0\leq r\leq a,
\qquad\qquad
\alpha>1,
\label{hp:a-b-alpha}
\end{equation}
and let us set
\begin{gather}
\mu:=\min\{a,2\},
\qquad\qquad
B:=\frac{3r}{\alpha t_{0}^{\alpha}},
\label{defn:mu-B}
\\[0.5ex]
\Gamma_{2}:=\exp\left(a(a+r+8)+\frac{5r(a+r+4)}{2}+\frac{3r}{\alpha t_{0}^{\alpha}}+\frac{r\log 3}{\alpha-1}\right).
\label{defn:Gamma}
\end{gather}

Then every solution to problem (\ref{eqn:main})--(\ref{eqn:data}) satisfies the decay estimate
\begin{equation}
|u'(t)|^{2}+|A^{1/2}u(t)|^{2}\leq
\Gamma_{2}\left(4e^{2B}|u_{1}|^{2}+|A^{1/2}u_{0}|^{2}+\frac{2}{t_{0}^{2}}|u_{0}|^{2}\right)
\left(\frac{t_{0}}{t}\right)^{\mu}
\nonumber
\end{equation}
for every $t\geq t_{0}$.

\end{thm}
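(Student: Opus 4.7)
The proof proceeds by the spectral reduction explained in the functional setting of Section~\ref{sec:statements}. Applying the generalized Fourier transform $\mathscr{F}$, it is enough to produce a uniform-in-$\xi$ decay estimate for the scalar ODE (\ref{eqn:uxi}) with parameter $\lambda=\lambda(\xi)$ and coefficient given by (\ref{defn:b-hyp-alpha}); integrating such a pointwise estimate via (\ref{defn:norm-xi})--(\ref{defn:A-alpha}) then delivers the PDE bound. So the whole problem reduces to a uniform-in-$\lambda$ estimate for solutions of the family (\ref{ode:eqn}).

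To prove the scalar estimate I would split the analysis according to the ``hyperbolic'' versus ``parabolic'' character of (\ref{ode:eqn}), as dictated by the relative size of $\lambda$ and $b(t)$. In the \emph{hyperbolic regime} (large $\lambda$) I pass to the polar coordinates (\ref{defn:polar}), so that (\ref{eqn:theta}) gives $\thetal'(t)\sim\lambda$ and the energy $\rhol(t)^{2}$ is given \emph{exactly} by the integral formula (\ref{est:rho2}). The task then becomes showing
\begin{equation*}
2\int_{t_{0}}^{t}b(s)\sin^{2}(\thetal(s))\,ds\geq \mu\log(t/t_{0})-C,
\end{equation*}
with $C$ independent of $\lambda$. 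Using $2\sin^{2}(\thetal)=1-\cos(2\thetal)$ and $b(s)=(a+r\sin(s^{\alpha}))/s$, this reduces to a leading term $a\log(t/t_{0})$ plus three oscillating integrals with kernels $\cos(2\thetal(s))/s$, $\sin(s^{\alpha})/s$, and $\sin(s^{\alpha})\cos(2\thetal(s))/s$. Each must be bounded by a $\lambda$-independent constant via integration by parts, crucially using the assumption $\alpha>1$ so that the phase $s^{\alpha}\pm 2\thetal(s)$ has a derivative of size $\alpha s^{\alpha-1}\pm 2\thetal'(s)$ that eventually dominates. This is the content of Proposition~\ref{prop:hyp-alpha}.

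In the \emph{parabolic regime} (small $\lambda$) the polar angle is no longer close to linear and the oscillating-integral scheme fails. Here I would instead rely on the direct energy/multiplier estimates of Proposition~\ref{prop:parabolic}: over the short time window in which this regime is relevant, $\ul'(t)^{2}+\lambda^{2}\ul(t)^{2}$ decays like $(t_{0}/t)^{2}$, with a constant involving $|u_{0}|^{2}/t_{0}^{2}$ compensating the smallness of $\lambda$. Since $\mu\leq 2$ this is absorbed into the target $(t_{0}/t)^{\mu}$ bound, and matching with the hyperbolic regime at a threshold of order $1/t_{0}$ produces the explicit constants $B$ and $\Gamma_{2}$ displayed in (\ref{defn:mu-B})--(\ref{defn:Gamma}).

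The main obstacle is the $\lambda$-uniform estimate of the mixed oscillating integral $\int_{t_{0}}^{t}\sin(s^{\alpha})\cos(2\thetal(s))/s\,ds$. Writing the integrand as a sum of two terms with phases $\phi_{\pm}(s)=s^{\alpha}\pm 2\thetal(s)$, one notices that $\phi_{-}'(s)=\alpha s^{\alpha-1}-2\thetal'(s)$ can vanish at some point $s^{*}\sim\lambda^{1/(\alpha-1)}$, so standard integration by parts is degenerate near $s^{*}$. A $\lambda$-uniform bound therefore requires splitting the integral: on a bounded neighbourhood of $s^{*}$ one uses a direct size estimate exploiting the $1/s$ factor, while away from $s^{*}$ integration by parts gains a factor $1/|\phi_{\pm}'(s)|$. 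The balance of these two contributions is what ultimately produces the $r\log 3/(\alpha-1)$ term appearing in the formula for $\Gamma_{2}$ in (\ref{defn:Gamma}).
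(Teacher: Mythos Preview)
Your proposal is correct and follows essentially the same route as the paper: spectral reduction to (\ref{ode:eqn}), then the combination of Proposition~\ref{prop:hyp-alpha} (polar coordinates plus oscillating-integral bounds) with Proposition~\ref{prop:parabolic}, glued at the threshold $\lambda t_{0}=1$, exactly as in the proof of statement~(2) of Proposition~\ref{prop:main}. One small imprecision: the oscillating-integral bounds you quote are not literally $\lambda$-independent---Lemmas~\ref{lemma:osc-int} and~\ref{lemma:s-alpha} give constants of the form $C/(\lambda t_{0})$ plus $\lambda$-free terms---but you have already identified the fix, namely that these become uniformly $O(1)$ once $\lambda t_{0}\geq 1$, which is precisely why the parabolic regime handles the complementary range and why the threshold sits at $1/t_{0}$.
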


Our third result is an example in which the oscillations of the damping coefficient alter the expected decay rate of solutions.

\begin{thm}[Resonant oscillations]\label{thm:resonance}

Let $H$ and $A$ be as in the functional setting described at the beginning of this section, with $A$ not identically zero. 

Then for every pair of real numbers $a\geq r>0$ there exists a damping coefficient $b:[t_{0},+\infty)\to\re$ of class $C^{\infty}$ with the following properties.
\begin{enumerate}
\renewcommand{\labelenumi}{(\arabic{enumi})}

\item  (Scale invariant behavior). The damping coefficient $b$ satisfies
\begin{equation}
\frac{a-r}{t}\leq b(t)\leq\frac{a+r}{t}
\qquad
\forall t\geq t_{0}.
\label{th:bound-b}
\end{equation}

\item  (Integrability of oscillations). The limit
\begin{equation}
\lim_{t\to +\infty}\left(\frac{t_{0}}{t}\right)^{a}\exp\left(\int_{t_{0}}^{t}b(s)\,ds\right)
\label{th:lim-B}
\end{equation}
exists and is a real number.

\item  (Slower decay of solutions). There exists a positive real number $\Gamma_{3}$, that depends on $t_{0}$, $a$, $r$ and on the operator $A$, such that the function defined by (\ref{defn:energy-norm}) satisfies
\begin{equation}
\mathcal{E}(t)\geq
\frac{\Gamma_{3}}{t^{a-r/2}}
\qquad
\forall t\geq t_{0}.
\label{th:bad-decay}
\end{equation}

\end{enumerate}

\end{thm}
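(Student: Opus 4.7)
The construction is guided by the third bullet of the overview in the introduction. Since $A$ is not identically zero, the function $\lambda:\mathcal{M}\to[0,+\infty)$ is not almost everywhere zero, so I can pick some $\lambda_{0}>0$ in its essential range, meaning that $\mu\{\xi:|\lambda(\xi)-\lambda_{0}|<\delta\}>0$ for every $\delta>0$. I would then set
\[
b(t):=\frac{a+r\cos(2\lambda_{0}t)}{t},\qquad t\geq t_{0}.
\]
This is $C^{\infty}$, and (1) is immediate from $|\cos|\leq 1$. For (2), a single integration by parts shows that $\int_{t_{0}}^{+\infty}s^{-1}\cos(2\lambda_{0}s)\,ds$ converges, and this is precisely the logarithm of the quantity in (\ref{th:lim-B}).

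The heart of the proof is a lower bound for the solutions of the scalar ODE (\ref{ode:eqn}) at the resonant frequency $\lambda=\lambda_{0}$. Passing to polar coordinates (\ref{defn:polar}) and writing $\theta_{\lambda_{0}}(t)=\lambda_{0}t+\phi(t)$, the product-to-sum identity
\[
(a+r\cos(2\lambda_{0}t))\sin(2\lambda_{0}t+2\phi)=\frac{r}{2}\sin(2\phi)+a\sin(2\lambda_{0}t+2\phi)+\frac{r}{2}\sin(4\lambda_{0}t+2\phi)
\]
turns (\ref{eqn:theta}) into the sum of a dissipative averaged term $-r\sin(2\phi)/(4t)$, stable near $\phi=0$, plus an oscillating remainder of size $O(1/t)$ with a slowly varying phase (since $\phi'=O(1/t)$). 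Starting from the initial choice $\phi(t_{0})=0$, a bootstrap argument combining integration by parts on the oscillating remainder with a Gronwall estimate on the dissipative part shows that $\phi(t)$ stays small and in fact $\phi(t)\to 0$ as $t\to+\infty$.

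With this in hand, substituting $\sin^{2}(\theta_{\lambda_{0}}(s))=(1-\cos(2\lambda_{0}s+2\phi(s)))/2$ in the explicit formula (\ref{est:rho2}) and again expanding $b(s)\cos(2\lambda_{0}s+2\phi(s))$ by product-to-sum identities, every purely oscillating contribution integrates to a bounded quantity, while the only non-oscillating term is $r\cos(2\phi(s))/(2s)$, whose integral equals $(r/2)\log(t/t_{0})+O(1)$ because $\cos(2\phi(s))\to 1$. Combined with (2), this yields
\[
2\int_{t_{0}}^{t}b(s)\sin^{2}(\theta_{\lambda_{0}}(s))\,ds=(a-r/2)\log(t/t_{0})+O(1),
\]
so $\rho_{\lambda_{0}}(t)^{2}\geq C\,t^{-(a-r/2)}$ for a suitable constant $C>0$. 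To transfer this lower bound to $\mathcal{E}(t)$, for each $t\geq t_{0}$ I would pick normalized initial data whose images under $\mathscr{F}$ are concentrated on $\{\xi:|\lambda(\xi)-\lambda_{0}|<\delta(t)\}$ with the same phase choice as for the resonant ODE; taking $\delta(t)$ small enough that continuous dependence of solutions of (\ref{eqn:uxi}) on the parameter $\lambda(\xi)$ on $[t_{0},t]$ preserves the estimate up to multiplicative constants, one obtains $\mathcal{E}(t)\geq\Gamma_{3}/t^{a-r/2}$.

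The main obstacle is the long-time analysis of $\phi$ in the second step: one has to justify simultaneously the averaging of the oscillatory remainder (via integration by parts, exploiting $\phi'=O(1/t)$) and the convergence $\phi\to 0$ driven by the dissipative term (via a Gronwall-type estimate), uniformly in $t$. Once this asymptotic stability is secured, everything else reduces to elementary computations on oscillating integrals, in line with the technique developed for Theorems~\ref{thm:decay} and~\ref{thm:alpha}.
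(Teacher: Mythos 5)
There is a genuine gap, and it sits precisely where you flag ``the main obstacle.'' Your plan takes the explicit coefficient $b(t)=(a+r\cos(2\lambda_{0}t))/t$ and then tries to control the phase error $\phi(t):=\theta_{\lambda_{0}}(t)-\lambda_{0}t$ well enough that $\int_{t_{0}}^{t}\cos(2\phi(s))\,s^{-1}ds=\log(t/t_{0})+O(1)$. To get the exponent $a-r/2$ exactly (as the theorem requires) you need $\phi(s)\to 0$ fast enough that $\int_{t_{0}}^{\infty}\phi(s)^{2}s^{-1}ds<\infty$; if $\phi$ merely stays bounded of size $\epsilon$, the argument only gives the weaker lower bound $\mathcal{E}(t)\geq\Gamma_{3}t^{-(a-r/2+c\epsilon^{2})}$, which does not imply (\ref{th:bad-decay}). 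The difficulty is that in the equation $\phi'=-\tfrac{r}{4t}\sin(2\phi)+(\text{oscillating remainder of size }O(1/t))$ the restoring term and the forcing are of the \emph{same} order $1/t$, so there is no separation of scales to exploit; a Gronwall-plus-integration-by-parts bootstrap along the lines you sketch is far from routine, and you do not carry it out. The paper's Remark~\ref{rmk:classic} explicitly acknowledges that they \emph{cannot} prove the result for such an explicit choice of $b$, and that doing so ``would require sharper estimates on some oscillating integrals'' --- i.e.\ exactly the estimate your proposal assumes.

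The paper's actual proof of statement~(3) of Proposition~\ref{prop:main} sidesteps this phase-stability analysis entirely by a self-consistency trick: it first solves
\begin{equation}
\etal'(t)=\lambda-\frac{a+r\cos(2\etal(t))}{2t}\sin(2\etal(t)),\qquad\etal(t_{0})=\tfrac{\pi}{2},
\nonumber
\end{equation}
and \emph{then} defines $b(t):=\bigl(a+r\cos(2\etal(t))\bigr)/t$. With this implicit definition, the polar angle $\thetal$ of the solution to (\ref{ode:eqn}) with data $u(t_{0})=0$, $u'(t_{0})=1$ satisfies the same ODE with the same initial condition, hence $\thetal\equiv\etal$ by uniqueness. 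The ``phase error'' vanishes identically, the trigonometric expansion of $-2b(s)\sin^{2}(\thetal(s))$ yields $-(a-r/2)s^{-1}$ plus genuinely oscillating terms $\cos(2\etal)/s$ and $\cos(4\etal)/s$, and Lemma~\ref{lemma:osc-int} gives the required $O(1)$ bounds directly. Parts~(1), (2), and the transfer from the resonant ODE to $\mathcal{E}(t)$ via concentration near $\lambda_{0}$ and continuous dependence are as you describe, so your outline is correct there; but the core lower bound requires the implicit construction of $b$, not the explicit one, and you would need to supply the missing phase analysis before your route could work.
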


%\clearpage

We conclude by comparing our results with the previous examples that we mentioned in the introduction.

\begin{rmk}\label{rmk:improvement-p}
\begin{em}

When $m\geq 2$, Theorem~\ref{thm:decay} shows that solutions decay at least as $1/t^{2}$. In this special case our result improves \cite[Theorem~2]{2023-AsymptAn-AslanEbert}, both because $m$ can be equal to~2, and because the difference $M-m$ is not required to be small with respect to $m-2$. In other words, in this case solutions always decay as prescribed by (\ref{decay-p}), even if oscillations are large in size and are allowed to touch the critical threshold $2/t$.

Theorem~\ref{thm:alpha} is the counterpart of example (\ref{hp:slow}) in the range $\alpha>1$. Now we know that both slow and fast oscillations are ineffective, but for opposite reasons. In addition, in our result we do not need that oscillations remain within the non-effective regime.

Finally, let us consider Theorem~\ref{thm:resonance}. In the case where $a\in(0,2)$, it provides an example where (\ref{decay-p}) decays as $1/t^{2}$, (\ref{decay-h}) decays as $1/t^{a}$, but there are solutions to (\ref{eqn:main}) that decay at most as $1/t^{a-r/2}$. In particular, these solutions are slower than what prescribed and expected by (\ref{decay-p}) and (\ref{decay-h}). This shows that in~\cite[Theorem~1]{2023-AsymptAn-AslanEbert} an absolute integrability condition of the form (\ref{hp:abs-int}) can not be replaced by simple integrability.

\end{em}
\end{rmk}

\begin{rmk}[The classic model case]\label{rmk:classic}
\begin{em}

Let us consider the very special case where
$$
b(t)=\frac{a+r\sin t}{t}
\qquad
\forall t>0,
$$
for example with $a=1$ and $r=1/2$. This damping coefficient oscillates between the effective and the non-effective regime.

Theorem~\ref{thm:decay} applies with $m=1/2$ and $M=3/2$, yielding that solutions decay at least as $t^{-1/2}$. A refinement of our arguments, applied to this very special case, would give that actually solutions decay at least as $t^{-3/4}$, where $3/4=a-r/2$. For the sake of shortness, we do not include this computation in this paper.

On the contrary, Theorem~\ref{thm:resonance} does \emph{not} apply to this example. What we actually prove is the existence of a damping coefficient of the form 
$$
b(t)=\frac{a+r\sin(\eta(t))}{t}
\qquad
\forall t>0,
$$
even with $a=1$ and $r=1/2$, for which the decay rate is not better than $t^{-3/4}$, where again $3/4=a-r/2$. A careful inspection of the proof (where we have $\cos(2\eta(t))$ instead of $\sin(\eta(t))$, but the difference is not relevant) reveals that we can choose $\eta(t)$ such that $\eta(t)=t+O(\log t)$ as $t\to +\infty$, but we can not guarantee that $\eta(t)$ can be chosen to be exactly equal to $t$. This would require sharper estimates on some oscillating integrals. 

\end{em}
\end{rmk}

%\clearpage

\paragraph{\textmd{\textit{The key tool}}}

The proof of our main results relies on some estimates for the decay of the energy of solutions to the family of ordinary differential equations (\ref{ode:eqn}). We collect these estimates in the following proposition, whose three statements correspond to our three main results.

\begin{prop}\label{prop:main}

Let $t_{0}$ be a positive real number.

\begin{enumerate}
\renewcommand{\labelenumi}{(\arabic{enumi})}

\item  Let $\lambda\geq 0$ be a real number, and let $b:[t_{0},+\infty)\to\re$ be a measurable function that satisfies (\ref{hp:bound-m1m2}) for suitable constants $M\geq m>0$.

Then every solution to equation (\ref{ode:eqn}) satisfies the decay estimate
\begin{equation}
\ul'(t)^{2}+\lambda^{2}\ul(t)^{2}\leq
e^{m(M+8)}
\left\{4\ul'(t_{0})^{2}+\left(\lambda^{2}+\frac{2}{t_{0}^{2}}\right)\ul(t_{0})^{2}\right\}
\left(\frac{t_{0}}{t}\right)^{\mu}
\label{th:prop-gen}
\end{equation}
for every $t\geq t_{0}$, where $\mu$ is defined by (\ref{defn:mu}).

\item  Let $\lambda\geq 0$ be a real number, and let $b(t)$ be given by (\ref{defn:b-hyp-alpha}) for suitable parameters $a$, $r$, $\alpha$ satisfying (\ref{hp:a-b-alpha}). Let us define $\mu$, $B$, $\Gamma_{2}$ as in (\ref{defn:mu-B}) and (\ref{defn:Gamma}).

Then every solution to equation (\ref{ode:eqn}) satisfies the decay estimate
\begin{equation}
\ul'(t)^{2}+\lambda^{2}\ul(t)^{2}\leq
\Gamma_{2}\left\{4e^{2B}\ul'(t_{0})^{2}+\left(\lambda^{2}+\frac{2}{t_{0}^{2}}\right)\ul(t_{0})^{2}\right\}
\left(\frac{t_{0}}{t}\right)^{\mu}
\label{th:prop-alpha}
\end{equation}
for every $t\geq t_{0}$.

\item  For every pair of real numbers $a\geq r>0$, and for every $\lambda>0$, there exist a damping coefficient $b:[t_{0},+\infty)\to\re$ and a positive real number $\Gamma_{3}$, that depends on $t_{0}$, $a$, $r$, $\lambda$, such that
\begin{itemize}

\item  the damping coefficient $b$ is of class $C^{\infty}$ and satisfies (\ref{th:bound-b}) and (\ref{th:lim-B}),

\item  the solution to (\ref{ode:eqn}) with initial data $u(t_{0})=0$ and $u'(t_{0})=1$ satisfies 
\begin{equation}
\ul'(t)^{2}+\lambda^{2}\ul(t)^{2}\geq
\Gamma_{3}\left(\frac{t_{0}}{t}\right)^{a-r/2}
\qquad
\forall t\geq t_{0}.
\label{th:resonance}
\end{equation}

\end{itemize}

\end{enumerate}

\end{prop}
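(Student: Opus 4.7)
The plan is to split parts~(1) and~(2) into a parabolic and a hyperbolic regime according to the ratio between $\lambda$ and $b(t)$, and to prove part~(3) by an explicit self-consistent construction that produces a resonance. Throughout, the polar representation (\ref{defn:polar})--(\ref{est:rho2}) is the workhorse: once we control the exponent $-2\int b(s)\sin^{2}(\thetal(s))\,ds$, the energy identity (\ref{est:rho2}) produces the desired decay rate of $\rhol(t)^{2}$.

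For parts (1) and (2), I would introduce a splitting time $t_{*}\ge t_{0}$ with $\lambda t_{*}$ of order~$1$ (taking $t_{*}=t_{0}$ when $\lambda t_{0}$ is already large). On $[t_{0},t_{*}]$ the analysis is purely parabolic: an elementary modified energy such as
\[
F(t):=\ul'(t)^{2}+\lambda^{2}\ul(t)^{2}+\frac{2}{t}\ul(t)\ul'(t)+\frac{2}{t^{2}}\ul(t)^{2}
\]
satisfies $F'(t)\leq 0$ modulo terms one can absorb using only the pointwise bound $b(t)\ge m/t$ (resp.\ $b(t)\ge(a-r)/t$), and comparing $F(t_{*})$ with $F(t_{0})$ yields the $(t_{0}/t_{*})^{2}$-contribution responsible for the $\mu=2$ case in (\ref{th:prop-gen})/(\ref{th:prop-alpha}). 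On $[t_{*},+\infty)$ I would pass to polar coordinates; from $\thetal'=\lambda-b(t)\sin(2\thetal)/2$ and $b=O(1/t)$ it follows that $\thetal'\ge\lambda/2$ and $\thetal(t)=\lambda t+O(\log t)$. The core step is to replace $\sin^{2}(\thetal(s))$ in the exponent by its mean value $1/2$ via $2\sin^{2}\thetal=1-\cos(2\thetal)$, and then to bound the residual oscillating integral $\int_{t_{*}}^{t}b(s)\cos(2\thetal(s))\,ds$ by integration by parts against $(\sin(2\thetal(s)))'=2\thetal'(s)\cos(2\thetal(s))$; the lower bound on $\thetal'$ and the pointwise bounds $m/t\le b(t)\le M/t$ make this integral $O(1)$ with an explicit constant. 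For part~(2) the additional piece $r\sin(s^{\alpha})/s$ is handled separately by integration by parts based on $(\cos(s^{\alpha}))'=-\alpha s^{\alpha-1}\sin(s^{\alpha})$, which converges precisely because $\alpha>1$.

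For part~(3), I would reverse-engineer $b$. Fix $\lambda>0$ and let $\thetal$ be the unique global $C^{\infty}$ solution of the closed first-order ODE
\[
\thetal'(t)=\lambda-\frac{a+r\cos(2\thetal(t))}{2t}\sin(2\thetal(t)),
\]
with initial value $\thetal(t_{0})$ chosen so that (\ref{defn:polar}) matches $\ul(t_{0})=0$ and $\ul'(t_{0})=1$, and then set $b(t):=(a+r\cos(2\thetal(t)))/t$. This $b$ is automatically $C^{\infty}$ and satisfies (\ref{th:bound-b}) by construction. Since $b=O(1/t)$ we have $\thetal'=\lambda+O(1/t)$; integration by parts against $(\sin(2k\thetal(s)))'=2k\thetal'(s)\cos(2k\thetal(s))$ then shows that $\int_{t_{0}}^{+\infty}\cos(2k\thetal(s))/s\,ds$ converges as a conditional improper integral for $k\in\{1,2\}$. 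The identity $\int_{t_{0}}^{t}b(s)\,ds=a\log(t/t_{0})+r\int_{t_{0}}^{t}\cos(2\thetal(s))/s\,ds$ then yields (\ref{th:lim-B}). Expanding
\[
2b(s)\sin^{2}(\thetal(s))=\frac{(a-r/2)+(r-a)\cos(2\thetal(s))-(r/2)\cos(4\thetal(s))}{s}
\]
and using the same conditional convergence gives $2\int_{t_{0}}^{t}b\sin^{2}\thetal=(a-r/2)\log(t/t_{0})+O(1)$, and substituting into (\ref{est:rho2}) with $\rhol(t_{0})=1$ produces the lower bound (\ref{th:resonance}) with an explicit constant $\Gamma_{3}$.

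The main obstacle is uniformity in $\lambda$: the integration-by-parts arguments for the oscillating integrals in parts~(1) and~(2) degrade near the transition $\lambda t\sim 1$, and the constants in (\ref{th:prop-gen})/(\ref{th:prop-alpha}) must be explicit and $\lambda$-independent to allow the later Fourier synthesis into the PDE statements. For part~(3), the delicate technical point is rigorously verifying that the self-consistently defined $\thetal$ stays monotone with $\thetal'$ bounded below by a positive constant throughout $[t_{0},+\infty)$ (if necessary after absorbing an initial transient into $\Gamma_{3}$), since this is exactly what legitimises the integration by parts and makes the error terms genuinely $O(1)$ rather than $o(\log t)$.
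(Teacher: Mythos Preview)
Your overall architecture matches the paper: split at $t_*\sim 1/\lambda$, handle $[t_*,\infty)$ via polar coordinates and oscillating-integral bounds, and for part~(3) define $b$ self-consistently through the closed ODE for $\thetal$. Your hyperbolic analysis and your treatment of part~(3) are essentially what the paper does (the paper writes $\thetal=\lambda t+\hl$ with $|\hl'|\le(a+r)/(2t)$ and integrates by parts against $\sin(n\lambda s)$ rather than against $(\sin 2\thetal)'$ directly, which sidesteps your worry about a positive lower bound on $\thetal'$, but the two routes are equivalent up to bookkeeping).

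The genuine gap is in your parabolic step. First, $F'\le 0$ does not follow from $m/t\le b\le M/t$ alone: computing gives
\[
F'=\frac{2}{t}(1-bt)\ul'^{2}+\frac{2}{t^{2}}(1-bt)\ul\ul'-\frac{2\lambda^{2}}{t}\ul^{2}-\frac{4}{t^{3}}\ul^{2},
\]
and the cross term has uncontrolled sign for general $m,M$. Second, and more seriously, even if $F'\le 0$ held it would yield only \emph{boundedness} $E(t_*)\lesssim F(t_0)$, not the decay $E(t_*)\lesssim(\lambda t_0)^{\mu}F(t_0)$ you need; chaining a merely bounded $E(t_*)$ with the hyperbolic factor $(t_*/t)^{m}$ leaves a constant of order $(t_*/t_0)^{m}=(\lambda t_0)^{-m}$, which blows up as $\lambda\to 0$ --- exactly the uniformity failure you flag but do not resolve. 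Third, for part~(2) the lower bound $b\ge(a-r)/t$ is vacuous when $a=r$; the paper instead writes $b(t)=a/t+r\sin(t^{\alpha})/t$ and uses that $\bigl|\int_{t_0}^{t}r\sin(s^{\alpha})/s\,ds\bigr|\le 3r/(\alpha t_0^{\alpha})=:B$ uniformly, so the effective lower bound in the parabolic step is $a/t$, not $(a-r)/t$. The paper's replacement for your Lyapunov argument is a splitting $\ul=u_{\lambda,1}+u_{\lambda,2}$ by initial data (all position vs.\ all velocity): on $[t_0,t_1]$, where $t_1$ is the first zero of $u_{\lambda,2}'$, one has $u_{\lambda,2}>0$ and hence $u_{\lambda,2}''+b\,u_{\lambda,2}'\le 0$, which integrates to $u_{\lambda,2}'(t)\le e^{B}\ul'(t_0)(t_0/t)^{m}$ and then to $|u_{\lambda,2}(t)|\le e^{B}|\ul'(t_0)|\,\gamma(m,t_0,t)$. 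This delivers genuine pointwise decay with the correct exponent on the parabolic interval; after $t_1$ the energy of $u_{\lambda,2}$ is frozen at $\lambda^{2}u_{\lambda,2}(t_1)^{2}$, which is small precisely because $\lambda$ is small.
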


\begin{rmk}
\begin{em}

If the operator $A$ admits at least one positive eigenvalue, then from statement~(3) of Proposition~\ref{prop:main} it is immediate that Theorem~\ref{thm:resonance} holds true with a stronger conclusion, namely existence of a solution (and not just a supremum over all solutions) that decays less than the right-hand side of (\ref{th:bad-decay}).
    
\end{em}    
\end{rmk}
%\clearpage

\setcounter{equation}{0}
\section{Oscillating integrals}\label{sec:osc-int}

In this section we collect all the result concerning integrals of real functions that we need in the sequel. The first one is a general tool for proving boundedness or convergence of oscillating integrals.

\begin{lemma}\label{lemma:int-phi-psi}

Let $t_{0}$ be a positive real number, let $\varphi:[t_{0},+\infty)\to\re$ be a function of class $C^{2}$, and let $\psi:[t_{0},+\infty)\to\re$ be a function of class $C^{1}$.

Let us assume that $\varphi''(t)\geq 0$ for every $t\geq t_{0}$, and that there exist two positive real numbers $\varphi_{0}$ and $\Psi_{0}$ such that
\begin{equation}
|\varphi'(t)|\geq\varphi_{0}
\qquad\text{and}\qquad
|\psi'(t)|\leq\frac{\Psi_{0}}{t}
\qquad
\forall t\geq t_{0}.
\label{hp:phi-psi-0}
\end{equation}

Then  it turns out that
\begin{equation}
\left|\int_{t_{0}}^{t}\frac{\cos(\varphi(s))\sin(\psi(s))}{s}\,ds\right|\leq
\frac{4+\Psi_{0}}{\varphi_{0}t_{0}}
\qquad
\forall t\geq t_{0},
\label{th:int-phi-psi}
\end{equation}
and the following limit
\begin{equation}
\lim_{t\to +\infty}\int_{t_{0}}^{t}\frac{\cos(\varphi(s))\sin(\psi(s))}{s}\,ds
\label{th:lim-int-phi-psi}
\end{equation}
exists and is a real number.

\end{lemma}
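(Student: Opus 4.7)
My plan is to prove the lemma by a single integration by parts, exploiting the identity $\cos(\varphi(s))=(\sin\varphi(s))'/\varphi'(s)$, which is legitimate because the hypothesis $|\varphi'|\geq\varphi_0$ keeps the denominator away from zero.

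Before integrating by parts, I would first observe that $\varphi'$ has constant sign throughout $[t_0,+\infty)$: indeed, $\varphi'$ is continuous and by hypothesis never enters the interval $(-\varphi_0,\varphi_0)$, so either $\varphi'(t)\geq \varphi_0$ for all $t$ or $\varphi'(t)\leq-\varphi_0$ for all $t$. In particular, $\varphi'(t)^{2}\geq\varphi_{0}^{2}$ everywhere, and the function $1/\varphi'(s)$ is of class $C^{1}$ with derivative $-\varphi''(s)/\varphi'(s)^{2}$.

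Integration by parts then yields
\begin{equation*}
\int_{t_{0}}^{t}\frac{\cos(\varphi(s))\sin(\psi(s))}{s}\,ds
=\left[\frac{\sin(\varphi(s))\sin(\psi(s))}{s\,\varphi'(s)}\right]_{t_{0}}^{t}
-\int_{t_{0}}^{t}\sin(\varphi(s))\,\frac{d}{ds}\!\left[\frac{\sin(\psi(s))}{s\,\varphi'(s)}\right]ds.
\end{equation*}
The boundary term is estimated crudely by $2/(\varphi_{0}t_{0})$, using $|\sin|\leq 1$, $|\varphi'|\geq\varphi_{0}$, and $1/t\leq 1/t_0$. When the derivative is expanded, three terms appear, whose absolute values I would bound respectively by $\Psi_{0}/(\varphi_{0}s^{2})$ (using $|\psi'|\leq\Psi_{0}/s$), by $1/(\varphi_{0}s^{2})$, and by $\varphi''(s)/(t_{0}\varphi'(s)^{2})$. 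Integrating the first two over $[t_{0},t]$ yields $(\Psi_{0}+1)/(\varphi_{0}t_{0})$. For the third, the key observation is
\begin{equation*}
\int_{t_{0}}^{t}\frac{\varphi''(s)}{\varphi'(s)^{2}}\,ds=\frac{1}{\varphi'(t_{0})}-\frac{1}{\varphi'(t)},
\end{equation*}
and this quantity is at most $1/\varphi_{0}$ in both sign regimes for $\varphi'$ (here $\varphi''\geq 0$ guarantees that $\varphi'$ is monotone, so the signs of the two terms combine favorably). Pulling the factor $1/t_{0}$ out, the third term contributes $1/(\varphi_{0}t_{0})$, so the three integral terms together give $(\Psi_{0}+2)/(\varphi_{0}t_{0})$, and summing with the boundary term produces the required bound $(4+\Psi_{0})/(\varphi_{0}t_{0})$.

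For the convergence claim, the same decomposition works: as $t\to+\infty$ the boundary contribution $\sin(\varphi(t))\sin(\psi(t))/(t\varphi'(t))$ vanishes because its absolute value is at most $1/(t\varphi_{0})$, while each of the three integrands in the remainder is absolutely integrable on $[t_{0},+\infty)$, the first two because they are dominated by $C/s^{2}$ and the third because $\int_{t_0}^{+\infty}\varphi''(s)/\varphi'(s)^{2}\,ds$ was just shown to converge. Therefore the original integral admits a finite limit as $t\to+\infty$. The only subtle point, which I would flag as the main technical obstacle, is the case $\varphi'\leq-\varphi_{0}$: the monotonicity of $\varphi'$ forced by $\varphi''\geq 0$ must be combined with the sign information to keep $1/\varphi'(t_{0})-1/\varphi'(t)$ nonnegative and bounded by $1/\varphi_{0}$, since a careless sign would spoil the constant in \eqref{th:int-phi-psi}.
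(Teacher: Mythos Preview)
Your proof is correct and follows exactly the same route as the paper's: a single integration by parts against $\sin(\varphi(s))' = \varphi'(s)\cos(\varphi(s))$, producing the boundary term plus the three integrals coming from differentiating $\sin(\psi(s))/(s\varphi'(s))$, each bounded exactly as you describe to reach $(4+\Psi_0)/(\varphi_0 t_0)$, with convergence following from the absolute integrability of the three pieces. Your explicit remark that $\varphi'$ has constant sign, and the careful handling of the case $\varphi'\leq -\varphi_0$ in the estimate of $\int \varphi''/(\varphi')^2$, are in fact a small improvement in exposition over the paper, which writes the same bound without commenting on the sign.
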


\begin{proof}

Let us write the integral in the form
\begin{equation}
\int_{t_{0}}^{t}\frac{\cos(\varphi(s))\sin(\psi(s))}{s}\,ds=
\int_{t_{0}}^{t}\varphi'(s)\cos(\varphi(s))\cdot\frac{\sin(\psi(s))}{s\varphi'(s)}\,ds.
\nonumber
\end{equation}

Integrating by parts we obtain that
\begin{equation}
\int_{t_{0}}^{t}\frac{\cos(\varphi(s))\sin(\psi(s))}{s}\,ds=
I_{1}(t)+I_{2}(t)+I_{3}(t)+I_{4}(t),
\label{eqn:I1234}
\end{equation}
where
\begin{gather*}
I_{1}(t):=
\frac{\sin(\varphi(t))\sin(\psi(t))}{t\varphi'(t)}-
\frac{\sin(\varphi(t_{0}))\sin(\psi(t_{0}))}{t_{0}\varphi'(t_{0})},
\\[0.5ex]
I_{2}(t):=-\int_{t_{0}}^{t}\sin(\varphi(s))\cdot\frac{\cos(\psi(s))\psi'(s)}{s\varphi'(s)}\,ds,
\\[0.5ex]
I_{3}(t):=\int_{t_{0}}^{t}\sin(\varphi(s))\cdot\frac{\sin(\psi(s))}{s^{2}\varphi'(s)}\,ds,
\\[0.5ex]
I_{4}(t):=\int_{t_{0}}^{t}\sin(\varphi(s))\cdot\frac{\sin(\psi(s))}{s}\cdot
\frac{\varphi''(s)}{[\varphi'(s)]^{2}}\,ds.
\end{gather*}

Thanks to our assumption (\ref{hp:phi-psi-0}) we can estimate the first three terns as
\begin{gather*}
|I_{1}(t)|\leq
\frac{1}{\varphi_{0}}\left(\frac{1}{t}+\frac{1}{t_{0}}\right)\leq
\frac{2}{\varphi_{0}t_{0}},
\\[1ex]
|I_{2}(t)|\leq\frac{\Psi_{0}}{\varphi_{0}}\int_{t_{0}}^{t}\frac{ds}{s^{2}}\leq\frac{\Psi_{0}}{\varphi_{0}t_{0}},
\qquad\quad
|I_{3}(t)|\leq\frac{1}{\varphi_{0}}\int_{t_{0}}^{t}\frac{ds}{s^{2}}\leq\frac{1}{\varphi_{0}t_{0}},
\end{gather*}
and, since $\varphi''$ is nonnegative, we can estimate the last term as
\begin{equation}
|I_{4}(t)|\leq
\frac{1}{t_{0}}\int_{t_{0}}^{t}\frac{\varphi''(s)}{[\varphi'(s)]^{2}}\,ds=
\frac{1}{t_{0}}\left(\frac{1}{\varphi'(t_{0})}-\frac{1}{\varphi'(t)}\right)\leq
\frac{1}{\varphi_{0}t_{0}}.
\nonumber
\end{equation}

Plugging all these inequalities into (\ref{eqn:I1234}) we deduce (\ref{th:int-phi-psi}).

The same estimates show that $I_{1}(t)$ has a finite limit as $t\to +\infty$, and that the integrals $I_{2}(t)$, $I_{3}(t)$ and $I_{4}(t)$ are absolutely convergent, which is enough to prove that the limit in (\ref{th:lim-int-phi-psi}) exists and is a real number.
\end{proof}

\begin{rmk}\label{rmk:lemma-variant}
\begin{em}

Let us mention two variants of Lemma~\ref{lemma:int-phi-psi} that we exploit in the sequel (the proof is the same).
\begin{itemize}

\item  The same conclusions hold true with any combination of $\cos$/$\sin$ in the numerator of the fractions that we integrate in (\ref{th:int-phi-psi}) and (\ref{th:lim-int-phi-psi}).

\item  If we assume that both the inequality $\varphi''(t)\geq 0$, and the two inequalities in (\ref{hp:phi-psi-0}), hold true only in some finite interval $[t_{0},T_{0}]$, then we can conclude that the inequality in (\ref{th:int-phi-psi}) holds true for every $t$ in the same interval $[t_{0},T_{0}]$.

\end{itemize}

\end{em}
\end{rmk}

%\clearpage

In the following two results we apply Lemma~\ref{lemma:int-phi-psi} to the oscillating integrals that appear when we compute the decay rate of solutions to (\ref{ode:eqn}).

\begin{lemma}\label{lemma:osc-int}

Let $H_{0}$, $\lambda$, $t_{0}$ be three positive real numbers, and let $h:[t_{0},+\infty)\to\re$ be a function of class $C^{1}$ such that
\begin{equation}
|h'(t)|\leq\frac{H_{0}}{t}
\qquad
\forall t\geq t_{0}.
\label{hp:h'}
\end{equation}

Then for every positive integer $n$ it turns out that
\begin{equation}
\left|\int_{t_{0}}^{t}\frac{\cos(n\lambda s+nh(s))}{s}\,ds\right|\leq
\frac{2(H_{0}+4)}{\lambda t_{0}}
\qquad
\forall t\geq t_{0},
\label{th:osc-int-bound}
\end{equation}
and the following limit
\begin{equation}
\lim_{t\to +\infty}\int_{t_{0}}^{t}\frac{\cos(n\lambda s+nh(s))}{s}\,ds
\nonumber
\end{equation}
exists and is a real number.

\end{lemma}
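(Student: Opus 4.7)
The plan is to reduce this to Lemma \ref{lemma:int-phi-psi} by the addition formula
\[
\cos(n\lambda s+nh(s))=\cos(n\lambda s)\cos(nh(s))-\sin(n\lambda s)\sin(nh(s)),
\]
which splits the integral into two pieces, each of the form handled (up to the swap $\sin\leftrightarrow\cos$ mentioned in Remark \ref{rmk:lemma-variant}) by Lemma \ref{lemma:int-phi-psi} with $\varphi(s):=n\lambda s$ and $\psi(s):=nh(s)$. A direct application of Lemma \ref{lemma:int-phi-psi} to $\varphi(s)=n\lambda s+nh(s)$ is not viable, because we have no sign information on $h''$ and no lower bound on $|n\lambda+nh'(s)|$.

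With the splitting above, the hypotheses of Lemma \ref{lemma:int-phi-psi} are trivially satisfied: $\varphi'(s)\equiv n\lambda$ so one can take $\varphi_{0}=n\lambda$, and $\varphi''(s)\equiv 0\geq 0$; from (\ref{hp:h'}) one can take $\Psi_{0}=nH_{0}$. Lemma \ref{lemma:int-phi-psi} therefore yields
\[
\left|\int_{t_{0}}^{t}\frac{\cos(n\lambda s)\cos(nh(s))}{s}\,ds\right|
+\left|\int_{t_{0}}^{t}\frac{\sin(n\lambda s)\sin(nh(s))}{s}\,ds\right|
\leq \frac{2(4+nH_{0})}{n\lambda t_{0}},
\]
and splitting the right-hand side as $\dfrac{8}{n\lambda t_{0}}+\dfrac{2H_{0}}{\lambda t_{0}}$ and using $n\geq 1$ gives exactly the claimed bound $2(H_{0}+4)/(\lambda t_{0})$. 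This absorption of the factor $n$ is the only bookkeeping point worth checking, and it is why the bound in the statement is independent of $n$.

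For the existence of the limit as $t\to +\infty$, I would again invoke Lemma \ref{lemma:int-phi-psi}, which asserts that each of the two split integrals has a finite limit; the sum then has a finite limit as well. There is no genuine obstacle in this argument: the lemma is a direct corollary of the previous one, and the only mildly delicate step is the arithmetic that yields a constant independent of the integer $n$.
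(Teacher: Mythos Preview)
Your proof is correct and essentially identical to the paper's: the same addition-formula splitting, the same application of Lemma~\ref{lemma:int-phi-psi} (via Remark~\ref{rmk:lemma-variant}) with $\varphi(s)=n\lambda s$, $\psi(s)=nh(s)$, $\varphi_{0}=n\lambda$, $\Psi_{0}=nH_{0}$, and the same absorption of $n$ to obtain the $n$-independent constant.
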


\begin{proof}

Let us set
\begin{equation}
\psi_{1}(t):=\cos(n\lambda s)\cdot\cos(nh(s))
\qquad\text{and}\qquad
\psi_{2}(t):=\sin(n\lambda s)\cdot\sin(nh(s)),
\nonumber
\end{equation}
so that
\begin{equation}
\int_{t_{0}}^{t}\frac{\cos(n\lambda s+nh(s))}{s}\,ds=
\int_{t_{0}}^{t}\frac{\psi_{1}(s)}{s}\,ds-
\int_{t_{0}}^{t}\frac{\psi_{2}(s)}{s}\,ds.
\label{eqn:int-psi-12}
\end{equation}

Both integrals in the right-hand side fit into the framework of Lemma~\ref{lemma:int-phi-psi} and Remark~\ref{rmk:lemma-variant} with
\begin{equation}
\varphi(t):=n\lambda t,
\qquad
\psi(t):=nh(s),
\qquad
\varphi_{0}:=n\lambda,
\qquad
\Psi_{0}:= nH_{0}.
\nonumber
\end{equation}

Therefore, from Lemma~\ref{lemma:int-phi-psi} we deduce both the estimates
\begin{equation}
\left|\int_{t_{0}}^{t}\frac{\psi_{i}(s)}{s}\,ds\right|\leq
\frac{4+nH_{0}}{n\lambda t_{0}}\leq
\frac{H_{0}+4}{\lambda t_{0}}
\qquad
\forall t\geq t_{0},
\quad
\forall i=1,2,
\nonumber
\end{equation}
and the existence of the limit as $t\to +\infty$ of the two integrals in the right-hand side of (\ref{eqn:int-psi-12}). This completes the proof.
\end{proof}

%\clearpage

\begin{lemma}\label{lemma:s-alpha}

Let $H_{0}$, $\lambda$, $t_{0}$ be three positive real numbers, and let $h:[t_{0},+\infty)\to\re$ be a function of class $C^{1}$ satisfying (\ref{hp:h'}).

Then it turns out that
\begin{equation}
\left|\int_{t_{0}}^{t}\frac{\sin(s^{\alpha})\cdot\cos(2\lambda s+2h(s))}{s}\,ds\right|\leq
\frac{5(H_{0}+2)}{\lambda t_{0}}+\frac{\log 3}{\alpha-1}
\qquad
\forall t\geq t_{0}.
\nonumber
\end{equation}

\end{lemma}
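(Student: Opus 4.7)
The first move is the product-to-sum identity
\begin{equation*}
\sin(s^{\alpha})\cos(2\lambda s+2h(s))=\frac{1}{2}\sin(s^{\alpha}+2\lambda s+2h(s))+\frac{1}{2}\sin(s^{\alpha}-2\lambda s-2h(s)),
\end{equation*}
which reduces the estimate to controlling the two integrals $\int_{t_{0}}^{t}\sin(\Phi_{\pm}(s))/s\,ds$ with $\Phi_{\pm}(s):=s^{\alpha}\pm 2\lambda s\pm 2h(s)$. Expanding further via $\sin(\Phi_{\pm})=\sin(s^{\alpha}\pm 2\lambda s)\cos(2h(s))\pm\cos(s^{\alpha}\pm 2\lambda s)\sin(2h(s))$ casts each half in the framework of Lemma~\ref{lemma:int-phi-psi} and its $\sin/\cos$ variant from Remark~\ref{rmk:lemma-variant}, with $\varphi_{\pm}(s):=s^{\alpha}\pm 2\lambda s$ as the ``fast'' phase and $\psi(s):=2h(s)$ as the ``slow'' one. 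Here $\varphi_{\pm}''(s)=\alpha(\alpha-1)s^{\alpha-2}\geq 0$ (using $\alpha>1$) and $|\psi'(s)|\leq 2H_{0}/s$, so one may take $\Psi_{0}=2H_{0}$.

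For the $+$ integral the phase derivative $\varphi_{+}'(s)=\alpha s^{\alpha-1}+2\lambda\geq 2\lambda$ is bounded below on the whole of $[t_{0},+\infty)$, so Lemma~\ref{lemma:int-phi-psi} applies directly with $\varphi_{0}=2\lambda$ and yields, after summing the two sin/cos pieces, a bound of order $2(H_{0}+2)/(\lambda t_{0})$.

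The $-$ integral is the delicate case, since $\varphi_{-}'(s)=\alpha s^{\alpha-1}-2\lambda$ vanishes at $s_{*}=(2\lambda/\alpha)^{1/(\alpha-1)}$. I would set
\begin{equation*}
a_{1}:=\left(\frac{\lambda}{\alpha}\right)^{1/(\alpha-1)}\qquad\text{and}\qquad a_{2}:=\left(\frac{3\lambda}{\alpha}\right)^{1/(\alpha-1)},
\end{equation*}
chosen so that $|\varphi_{-}'(s)|\geq\lambda$ on $[0,a_{1}]\cup[a_{2},+\infty)$ and $\log(a_{2}/a_{1})=(\log 3)/(\alpha-1)$, and split $[t_{0},t]$ accordingly. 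On each (possibly empty) outer piece $[t_{0},t]\cap[0,a_{1}]$ and $[t_{0},t]\cap[a_{2},+\infty)$ the subinterval version of Lemma~\ref{lemma:int-phi-psi} from Remark~\ref{rmk:lemma-variant} applies with $\varphi_{0}=\lambda$, contributing at most $4(H_{0}+2)/(\lambda t_{0})$ each. On the central piece $[t_{0},t]\cap[a_{1},a_{2}]$ I would fall back on the trivial estimate $|\sin|\leq 1$, giving a contribution of at most $\int_{a_{1}}^{a_{2}}ds/s=(\log 3)/(\alpha-1)$.

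Collecting the three contributions to the $-$ integral, adding the $+$ integral, and multiplying the whole by the $1/2$ from the product-to-sum, one arrives at a bound of the shape $5(H_{0}+2)/(\lambda t_{0})+(\log 3)/(\alpha-1)$ (in fact slightly sharper on the logarithmic term). The only real obstacle is the case-by-case bookkeeping for the $-$ integral: depending on where $t_{0}$ and $t$ lie with respect to $a_{1}$ and $a_{2}$, some of the three subintervals may be empty and the correct lower endpoint for the application of Lemma~\ref{lemma:int-phi-psi} must be identified each time. The two structural hypotheses needed by the lemma — that $\varphi_{-}''\geq 0$ on each subinterval and that $|\varphi_{-}'|\geq\lambda$ on the outer two — are automatic from $\alpha>1$ and from the definition of $a_{1},a_{2}$.
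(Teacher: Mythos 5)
Your proposal is correct and follows essentially the same route as the paper: the same product-to-sum plus sin/cos expansion, the same reduction to Lemma~\ref{lemma:int-phi-psi} (with $\varphi_{\pm}(s)=s^{\alpha}\pm 2\lambda s$, $\psi=2h$), and the same three-piece splitting of the $-$ integral around the zero of $\varphi_{-}'$. The only (minor) departure is that you apply the trivial bound on the central interval to $\sin(\Phi_{-})$ as a whole rather than to each sin/cos piece separately, which costs the $\log 3$ term only once before the final factor $\tfrac12$ and so gives the slightly sharper $\tfrac{\log 3}{2(\alpha-1)}$ you note.
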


\begin{proof}

Thanks to the classical product-to-sum and sum-to-product identities, we can write the numerator of the integrand in the form
\begin{equation}
\sin(s^{\alpha})\cos(2\lambda s+2h(s))=
\frac{1}{2}\left\{g_{1}(s)-g_{2}(s)+g_{3}(s)+g_{4}(s)\right\},	
\nonumber
\end{equation}
where
\begin{gather*}
g_{1}(s):=\cos(s^{\alpha}+2\lambda s)\sin(2h(s)),
\qquad
g_{2}(s):=\cos(s^{\alpha}-2\lambda s)\sin(2h(s)),
\\
g_{3}(s):=\sin(s^{\alpha}+2\lambda s)\cos(2h(s)),
\qquad
g_{4}(s):=\sin(s^{\alpha}-2\lambda s)\cos(2h(s)).
\end{gather*}

Therefore, it is enough to show that
\begin{equation}
\left|\int_{t_{0}}^{t}\frac{g_{i}(s)}{s}\,ds\right|\leq
\begin{cases}
\dfrac{H_{0}+2}{\lambda t_{0}}      & \text{if $i=1,3$}, 
\\[3ex]
\dfrac{4(H_{0}+2)}{\lambda t_{0}}+\dfrac{\log 3}{\alpha-1}\quad     & \text{if $i=2,4$}.
\end{cases}
\label{est:int-gi}
\end{equation}

In the cases $i=1$ and $i=3$ we apply Lemma~\ref{lemma:int-phi-psi} and Remark~\ref{rmk:lemma-variant} with
\begin{equation}
\varphi(s):=s^{\alpha}+2\lambda s,
\qquad
\psi(s):=2h(s),
\qquad
\varphi_{0}:=2\lambda,
\qquad
\Psi_{0}:=2H_{0},
\nonumber
\end{equation}
and we deduce that
\begin{equation}
\left|\int_{t_{0}}^{t}\frac{g_{i}(s)}{s}\,ds\right|\leq
\frac{2H_{0}+4}{2\lambda t_{0}}=
\frac{H_{0}+2}{\lambda t_{0}}
\qquad
\forall i=1,3.
\nonumber
\end{equation}

In the cases $i=2$ and $i=4$ we would like to apply Lemma~\ref{lemma:int-phi-psi} and Remark~\ref{rmk:lemma-variant} with
\begin{equation}
\varphi(s):=s^{\alpha}-2\lambda s,
\qquad
\psi(s):=2h(s),
\qquad
\varphi_{0}:=\lambda,
\qquad
\Psi_{0}:=2H_{0}.
\label{choices:phi-psi}
\end{equation}

The problem is that the first inequality in (\ref{hp:phi-psi-0}) is not necessarily satisfied for every $t\geq t_{0}$. In order to overcome this difficulty, we consider the two times $0<t_{1}<t_{2}$ such that
\begin{equation}
\alpha t_{1}^{\alpha-1}=\lambda
\qquad\text{and}\qquad
\alpha t_{2}^{\alpha-1}=3\lambda,
\nonumber
\end{equation}
and we observe that
\begin{equation}
\varphi'(s)\leq -\lambda
\qquad
\forall s\in(0,t_{1}]
\qquad\quad\text{and}\quad\qquad
\varphi'(s)\geq \lambda
\qquad
\forall s\geq t_{2}.
\nonumber
\end{equation}

Let us consider now any interval $[t_{3},t_{4}]\subseteq[t_{0},+\infty)$. If either $[t_{3},t_{4}]\subseteq[t_{0},t_{1}]$ or $[t_{3},t_{4}]\subseteq[t_{2},+\infty)$, then we can apply Lemma~\ref{lemma:int-phi-psi} in the interval $[t_{3},t_{4}]$ with the choices (\ref{choices:phi-psi}), and deduce that
\begin{equation}
\left|\int_{t_{3}}^{t_{4}}\frac{g_{i}(s)}{s}\,ds\right|\leq
\frac{2(H_{0}+2)}{\lambda t_{3}}\leq
\frac{2(H_{0}+2)}{\lambda t_{0}}
\qquad
\forall i=2,4.
\label{est:g2-1}
\end{equation}

If $[t_{3},t_{4}]\subseteq[t_{1},t_{2}]$, then we obtain that
\begin{equation}
\left|\int_{t_{3}}^{t_{4}}\frac{g_{i}(s)}{s}\,ds\right|\leq
\int_{t_{3}}^{t_{4}}\frac{ds}{s}\leq
\int_{t_{1}}^{t_{2}}\frac{ds}{s}=
\log\left(\frac{t_{2}}{t_{1}}\right)=
\frac{\log 3}{\alpha-1}
\qquad
\forall i=2,4.
\label{est:g2-2}
\end{equation}

Finally we set
\begin{equation}
J_{1}:=[t_{0},t]\cap[t_{0},t_{1}],
\qquad
J_{2}:=[t_{0},t]\cap[t_{1},t_{2}],
\qquad
J_{3}:=[t_{0},t]\cap[t_{2},+\infty),
\nonumber
\end{equation}
and we write the integral of $g_{i}(s)/s$ over $[t_{0},t]$ as the sum of the integrals over $J_{1}$, $J_{2}$, $J_{3}$ (depending on the position of $t_{0}$ and $t$ with respect to $t_{1}$ and $t_{2}$, one or two of the $J_{k}$'s might be empty or just a singleton). We observe that the integrals over $J_{1}$ and $J_{3}$ satisfy (\ref{est:g2-1}), while the integral over $J_{2}$ satisfies (\ref{est:g2-2}). Summing the three estimates we obtain exactly (\ref{est:int-gi}) for $i=2$ and $i=4$.
\end{proof}

%\clearpage

The last result that we need is an estimate from above for the function
\begin{equation}
\gamma(m,t_{0},t):=\int_{t_{0}}^{t}\left(\frac{t_{0}}{s}\right)^{m}ds
\qquad
\forall t\geq t_{0}.
\label{defn:gamma}
\end{equation}

\begin{lemma}

Let $m$ and $t_{0}$ be positive real numbers, and let $\mu$ be defined as in (\ref{defn:mu}).

Then the function defined by (\ref{defn:gamma}) satisfies
\begin{equation}
\left(\frac{t_{0}}{t}\right)^{2}\gamma(m,t_{0},t)^{2}\leq
t_{0}^{2}\left(\frac{t_{0}}{t}\right)^{\mu}
\qquad
\forall t\geq t_{0}.
\label{th:gamma}
\end{equation}

\end{lemma}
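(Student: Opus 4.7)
After taking square roots, the claimed inequality (\ref{th:gamma}) is equivalent to
\begin{equation*}
\gamma(m, t_{0}, t) \leq t_{0} \left(\frac{t}{t_{0}}\right)^{1-\mu/2}
\qquad \forall t \geq t_{0}.
\end{equation*}
The plan is to split into the cases $m \geq 2$ and $m < 2$. When $m \geq 2$ one has $\mu = 2$ and the inequality reduces to $\gamma \leq t_{0}$; direct integration gives $\gamma(m, t_{0}, t) = t_{0}(1 - (t_{0}/t)^{m-1})/(m-1) \leq t_{0}/(m-1) \leq t_{0}$, as required.

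When $m < 2$ one has $\mu = m$, and setting $T := t/t_{0} \geq 1$ the inequality becomes $\int_{1}^{T} u^{-m}\,du \leq T^{1-m/2}$. The idea is to study $F(T) := T^{1-m/2} - \int_{1}^{T} u^{-m}\,du$. A direct computation gives $F(1) = 1$ and $F'(T) = T^{-m}[(1-m/2)T^{m/2} - 1]$, so $F'$ vanishes only at $T^{*} := (2/(2-m))^{2/m}$, where $F$ attains its global minimum (it is decreasing on $[1, T^{*}]$ and increasing on $[T^{*}, +\infty)$). Hence the claim reduces to verifying $F(T^{*}) \geq 0$.

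Using the identity $(T^{*})^{m/2} = 2/(2-m)$, a short calculation yields, for $m \neq 1$,
\begin{equation*}
F(T^{*}) = \frac{1 - h(m)}{1 - m},
\qquad
h(m) := \left(\frac{2}{2-m}\right)^{\!2/m} \cdot \frac{m(2-m)}{4}.
\end{equation*}
The degenerate case $m = 1$ is handled directly: $T^{*} = 4$ and $F(4) = 2 - 2\log 2 > 0$. Hence it remains to show that $h(m) - 1$ and $m - 1$ have the same sign on $(0, 2)$; equivalently, setting $\phi(m) := m \log h(m) = (2-2m)\log 2 + m \log m - (2-m)\log(2-m)$, to show that $\phi \leq 0$ on $(0, 1]$ and $\phi \geq 0$ on $[1, 2)$.

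\emph{The main obstacle} is this last elementary claim on $\phi$. The key observation is the antisymmetry $\phi(2-m) = -\phi(m)$, which in particular gives $\phi(1) = 0$ and makes the two inequalities equivalent, so it suffices to check $\phi \leq 0$ on $(0, 1]$. From $\phi'(m) = \log(m(2-m)/4) + 2$ one sees that $\phi'$ vanishes on $(0, 1)$ at a unique point $m_{1}$ (where $m_{1}(2-m_{1}) = 4/e^{2}$), is negative on $(0, m_{1})$, and positive on $(m_{1}, 1)$. Combined with the boundary values $\phi(0^{+}) = 0 = \phi(1)$, this shows that $\phi$ first decreases from $0$ and then returns to $0$ on $(0, 1]$, hence $\phi \leq 0$ there. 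This yields $F(T^{*}) \geq 0$ and completes the proof.
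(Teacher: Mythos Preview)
Your proof is correct, but for the case $m\in(0,2)$ you take a substantially more laborious route than the paper. You fix $m$ and analyze the function $F(T)$ of the time variable, locate its minimizer $T^{*}$, and then spend most of the argument verifying $F(T^{*})\geq 0$ via the auxiliary functions $h$ and $\phi$ and the antisymmetry $\phi(2-m)=-\phi(m)$. The paper instead swaps the roles of the two variables: after the change of variable $\sigma=t_{0}/s$ and setting $x=t_{0}/t$, $b=1-m/2$, the inequality becomes $\int_{x}^{1}(x/\sigma^{2})^{b}\,d\sigma\leq 1$, and for fixed $x\in(0,1)$ the left-hand side is a \emph{convex} function of $b\in(0,1)$ (the integrand is convex in $b$ for each $\sigma$). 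Hence its maximum over $b$ is attained at an endpoint, and both limits $b\to 0^{+}$ and $b\to 1^{-}$ equal $1-x<1$. This convexity-in-the-exponent trick dispatches the whole case in two lines, with no critical-point analysis and no auxiliary sign study. Your approach has the merit of being entirely self-contained one-variable calculus, but it obscures the structural reason the inequality holds.
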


\begin{proof}

In the case $m\geq 2$ the required inequality reduces to $\gamma(m,t_{0},t)\leq t_{0}$, which is true because in this case
\begin{equation}
\gamma(m,t_{0},t)=
\int_{t_{0}}^{t}\left(\frac{t_{0}}{s}\right)^{m}\,ds\leq
\int_{t_{0}}^{t}\left(\frac{t_{0}}{s}\right)^{2}\,ds=
t_{0}^{2}\left(\frac{1}{t_{0}}-\frac{1}{t}\right)\leq
t_{0}.
\nonumber
\end{equation}

In the case $m\in(0,2)$, with the change of variable $\sigma:=t_{0}/s$ we obtain that
\begin{equation}
\gamma(m,t_{0},t)=t_{0}\int_{t_{0}/t}^{1}\frac{1}{\sigma^{2-m}}\,d\sigma,
\nonumber
\end{equation}
so that (\ref{th:gamma}) reduces to
\begin{equation}
\left(\frac{t_{0}}{t}\right)^{2-m}
\left[\int_{t_{0}/t}^{1}\frac{1}{\sigma^{2-m}}\,d\sigma\right]^{2}\leq 1
\qquad
\forall m\in(0,2),
\quad
\forall t\geq t_{0}.
\nonumber
\end{equation}

Setting $x:=t_{0}/t$ and $b:=1-m/2$, this is equivalent to proving that
\begin{equation}
\int_{x}^{1}\left(\frac{x}{\sigma^{2}}\right)^{b}d\sigma\leq 1
\qquad
\forall b\in(0,1),
\quad
\forall x\in(0,1).
\nonumber
\end{equation}

For every fixed $x\in(0,1)$, the left-hand side is a convex function of $b$, and hence it attains its maximum either in the limit as $b\to 0^{+}$, or  in the limit as $b\to 1^{-}$. Since both limits are equal to $1-x$, the inequality is proved.
\end{proof}

%\clearpage

\setcounter{equation}{0}
\section{Estimates for a family of ODEs}\label{sec:ODEs}

In the following two subsections we prove different types of estimates for solutions to the family of ordinary differential equations (\ref{ode:eqn}). These estimates hold true under rather general assumption on the damping coefficient, and are satisfied for all admissible values of $\lambda$ and $t$. The proof of Proposition~\ref{prop:main} follows in the third subsection from a combination of these estimates, the main idea being that we exploit the ``parabolic'' version when $b(t)$ is large with respect to $\lambda$, namely when $t$ is small, and the ``hyperbolic'' version when $b(t)$ is small with respect to $\lambda$, namely when $t$ is large enough.

\subsection{Estimates in the ``parabolic'' regime}

\begin{prop}[``Parabolic'' regime]\label{prop:parabolic}

Let $t_{0}$ be a positive real number, and let $b_{1}:[t_{0},+\infty)\to\re$ and $b_{2}:[t_{0},+\infty)\to\re$ be two measurable functions. 

Let us set $b(t):=b_{1}(t)+b_{2}(t)$, and let us assume that
\begin{enumerate}
\renewcommand{\labelenumi}{(\roman{enumi})}

\item  $b(t)\geq 0$ for every $t\geq t_{0}$,

\item  there exists a positive real number $m$ such that
\begin{equation}
b_{1}(t)\geq\frac{m}{t}
\qquad
\forall t\geq t_{0},
\label{hp:b1}
\end{equation}

\item  there exists a real number $B$ such that
\begin{equation}
\left|\int_{t_{0}}^{t}b_{2}(s)\,ds\right|\leq B
\qquad
\forall t\geq t_{0}.
\label{hp:b2}
\end{equation}

\end{enumerate}

Then for every $\lambda>0$, and for every solution to equation (\ref{ode:eqn}), there exists $t_{1}\geq t_{0}$ (that depends on $\lambda$ and on initial data) such that (we recall that $\gamma(m,t_{0},t)$ is the function defined by (\ref{defn:gamma})) 
\begin{itemize}

\item  for every $t\in[t_{0},t_{1}]$ the solution satisfies the estimate
\begin{equation}
\ul'(t)^{2}+\lambda^{2}\ul(t)^{2}\leq
2\lambda^{2}\ul(t_{0})^{2}+2e^{2B}\ul'(t_{0})^{2}
\left\{\left(\frac{t_{0}}{t}\right)^{2m}+\lambda^{2}\gamma(m,t_{0},t)^{2}\right\},
\label{th:P-before}
\end{equation}

\item for every $t\geq t_{1}$ the solution satisfies the estimate
\begin{equation}
\ul'(t)^{2}+\lambda^{2}\ul(t)^{2}\leq
2\lambda^{2}\ul(t_{0})^{2}+2e^{2B}\ul'(t_{0})^{2}\lambda^{2}\gamma(m,t_{0},t_{1})^{2}.
\label{th:P-after}
\end{equation}

\end{itemize}

\end{prop}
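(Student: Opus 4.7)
The plan is to split $\ul$ according to initial data and treat the two pieces separately. By linearity I write $\ul = u^{(1)} + u^{(2)}$, where $u^{(1)}$ solves (\ref{ode:eqn}) with data $(\ul(t_{0}), 0)$ and $u^{(2)}$ solves (\ref{ode:eqn}) with data $(0, \ul'(t_{0}))$. Setting $F(w)(t):=w'(t)^{2}+\lambda^{2}w(t)^{2}$, the inequality $(a+b)^{2}\leq 2a^{2}+2b^{2}$ yields $F(\ul)(t)\leq 2F(u^{(1)})(t)+2F(u^{(2)})(t)$. Because $b\geq 0$, the classical energy identity $F(w)'(t)=-2b(t)w'(t)^{2}\leq 0$ applied to $u^{(1)}$ gives $F(u^{(1)})(t)\leq F(u^{(1)})(t_{0})=\lambda^{2}\ul(t_{0})^{2}$, which accounts for the summand $2\lambda^{2}\ul(t_{0})^{2}$ appearing in both (\ref{th:P-before}) and (\ref{th:P-after}). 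Hence the real task is to estimate $F(u^{(2)})$.

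For $u^{(2)}$, set $B(t):=\int_{t_{0}}^{t}b(s)\,ds$ and $v(t):=(u^{(2)})'(t)$. Reading $v'+b(t)v=-\lambda^{2}u^{(2)}$ as a first order linear ODE for $v$ with forcing $-\lambda^{2}u^{(2)}$, the integrating factor $e^{B(t)}$ yields the Duhamel identity
\begin{equation*}
v(t)=\ul'(t_{0})e^{-B(t)}-\lambda^{2}\int_{t_{0}}^{t}e^{B(s)-B(t)}u^{(2)}(s)\,ds,
\qquad
u^{(2)}(t)=\int_{t_{0}}^{t}v(\tau)\,d\tau.
\end{equation*}
Hypotheses (\ref{hp:b1}) and (\ref{hp:b2}) combine to produce, for $t_{0}\leq s\leq t$, the key pointwise bounds $e^{-B(t)}\leq e^{B}(t_{0}/t)^{m}$ and $e^{B(s)-B(t)}\leq e^{2B}(s/t)^{m}$; in particular $\int_{t_{0}}^{t}e^{-B(\tau)}\,d\tau\leq e^{B}\gamma(m,t_{0},t)$. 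These are precisely the mechanisms that make $(t_{0}/t)^{2m}$ and $\gamma(m,t_{0},t)^{2}$ appear in (\ref{th:P-before}).

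The closure would be by bootstrap. I would define $t_{1}$ as the supremum of those $T\geq t_{0}$ on which the pair of pointwise inequalities
\begin{equation*}
v(t)^{2}\leq 2e^{2B}\ul'(t_{0})^{2}(t_{0}/t)^{2m}
\qquad\text{and}\qquad
\lambda^{2}u^{(2)}(t)^{2}\leq 2e^{2B}\ul'(t_{0})^{2}\lambda^{2}\gamma(m,t_{0},t)^{2}
\end{equation*}
hold throughout $[t_{0},T]$. Feeding the second ansatz back into the Duhamel formula for $v$, the feedback term is controlled by a multiple of $\lambda^{2}e^{3B}|\ul'(t_{0})|\int_{t_{0}}^{t}(s/t)^{m}\gamma(m,t_{0},s)\,ds$, which is strictly smaller than the leading contribution $e^{B}|\ul'(t_{0})|(t_{0}/t)^{m}$ as long as $\lambda$ is small relative to $b(t)$; hence the ansatz is self-improving on $[t_{0},t_{1})$, and the stopping time $t_{1}$ is precisely where the feedback becomes comparable to the leading term, which coincides with the moment at which $(t_{0}/t)^{2m}$ gets absorbed into $\lambda^{2}\gamma(m,t_{0},t)^{2}$. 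Summing the two pointwise inequalities and combining with the bound on $F(u^{(1)})$ produces (\ref{th:P-before}) on $[t_{0},t_{1}]$.

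For $t\geq t_{1}$ the argument becomes purely energetic: monotonicity of $F(u^{(2)})$ gives $F(u^{(2)})(t)\leq F(u^{(2)})(t_{1})$, and by the stopping criterion the $(t_{0}/t_{1})^{2m}$ contribution has been absorbed into the $\lambda^{2}\gamma(m,t_{0},t_{1})^{2}$ contribution, so $F(u^{(2)})(t)\leq 2e^{2B}\ul'(t_{0})^{2}\lambda^{2}\gamma(m,t_{0},t_{1})^{2}$, yielding (\ref{th:P-after}) after combining with $F(u^{(1)})$. The main obstacle I expect is the bootstrap closure itself: the ansatz constants and the stopping criterion must be calibrated so that the feedback term is genuinely subordinate and so that the final multiplicative factor in front of $e^{2B}\ul'(t_{0})^{2}$ is exactly the $2$ claimed in the statement. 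This reduces to a careful analysis of the nested integral $\int_{t_{0}}^{t}(s/t)^{m}\gamma(m,t_{0},s)\,ds$ and its comparison with $\gamma(m,t_{0},t)^{2}$, whose behavior depends on whether $m<1$, $m=1$, or $m>1$.
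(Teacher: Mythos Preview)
Your decomposition $\ul=u^{(1)}+u^{(2)}$ and the energy argument for $u^{(1)}$ match the paper exactly. The divergence is in how you handle $u^{(2)}$, and here your bootstrap scheme has a genuine gap.

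You keep the forcing term $-\lambda^{2}u^{(2)}$ in the Duhamel formula and hope that the feedback integral $\lambda^{2}\int_{t_{0}}^{t}(s/t)^{m}\gamma(m,t_{0},s)\,ds$ stays strictly below the leading term. As you yourself flag, this only works ``as long as $\lambda$ is small relative to $b(t)$'', but the proposition carries no such restriction. For moderate or large $\lambda$ the feedback is immediately comparable to the leading term and the ansatz is not self-improving; your $t_{1}$ could then be arbitrarily close to $t_{0}$, at which point the transition to (\ref{th:P-after}) fails because at your stopping time you still have $v(t_{1})^{2}\approx 2e^{2B}\ul'(t_{0})^{2}(t_{0}/t_{1})^{2m}\neq 0$, so $F(u^{(2)})(t_{1})$ picks up both summands and you end with $4e^{2B}$ rather than $2e^{2B}$ in front of $\ul'(t_{0})^{2}\lambda^{2}\gamma(m,t_{0},t_{1})^{2}$---or worse, no bound at all if the ansatz never held beyond $t_{0}$.

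The paper sidesteps all of this with a sign argument. Assume without loss of generality $\ul'(t_{0})>0$ and define $t_{1}$ concretely as the \emph{first zero of $v=(u^{(2)})'$}. On $[t_{0},t_{1})$ one has $v>0$, hence $u^{(2)}>0$, hence the forcing $-\lambda^{2}u^{(2)}$ is nonpositive and can simply be dropped: the equation becomes the differential inequality $v'+b(t)v\leq 0$. This integrates directly to $0\leq v(t)\leq \ul'(t_{0})e^{-B(t)}\leq e^{B}\ul'(t_{0})(t_{0}/t)^{m}$, and one more integration gives $0\leq u^{(2)}(t)\leq e^{B}\ul'(t_{0})\gamma(m,t_{0},t)$. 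No bootstrap, no dependence on the size of $\lambda$, and the constants come out exactly right. Moreover, at $t=t_{1}$ one has $v(t_{1})=0$ \emph{by definition}, so $F(u^{(2)})(t_{1})=\lambda^{2}u^{(2)}(t_{1})^{2}\leq e^{2B}\ul'(t_{0})^{2}\lambda^{2}\gamma(m,t_{0},t_{1})^{2}$, and energy monotonicity for $t\geq t_{1}$ gives (\ref{th:P-after}) with the stated factor $2$.
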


\begin{proof}

Let us write $\ul(t)$ in the form
\begin{equation}
\ul(t):=u_{\lambda,1}(t)+u_{\lambda,2}(t),
\nonumber
\end{equation}
where $u_{\lambda,1}$ is the solution to equation (\ref{ode:eqn}) with initial data $u_{\lambda,1}(t_{0})=\ul(t_{0})$ and $u_{\lambda,1}'(t_{0})=0$, while $u_{\lambda,2}$ is the solution to equation (\ref{ode:eqn}) with initial data $u_{\lambda,2}(t_{0})=0$ and $u_{\lambda,2}'(t_{0})=\ul'(t_{0})$. We observe that
\begin{equation}
\ul'(t)^{2}+\lambda^{2}\ul(t)^{2}\leq
2\left(u_{\lambda,1}'(t)^{2}+\lambda^{2}u_{\lambda,1}(t)^{2}\right)+
2\left(u_{\lambda,2}'(t)^{2}+\lambda^{2}u_{\lambda,2}(t)^{2}\right),
\label{est:u-u1-u2}
\end{equation}
so that in the sequel it is enough to estimate the energy of $u_{\lambda,1}$ and $u_{\lambda,2}$ separately. To this end, for $i=1,2$ we consider the energy
\begin{equation}
E_{i}(t):=u_{\lambda,i}'(t)^{2}+\lambda^{2}u_{\lambda,i}(t)^{2},
\nonumber
\end{equation}
and we observe that
\begin{equation}
E_{i}'(t)=-2b(t)u_{\lambda,i}'(t)^{2}\leq 0,
\qquad
\forall t\geq t_{0}
\quad
\forall i=1,2.
\label{est:Ei'}
\end{equation}

In the case of $u_{\lambda,1}$, this is enough to conclude that
\begin{equation}
u_{\lambda,1}'(t)^{2}+\lambda^{2}u_{\lambda,1}(t)^{2}=
E_{1}(t)\leq
E_{1}(t_{0})=
\lambda^{2}\ul(t_{0})^{2}
\qquad
\forall t\geq t_{0}.
\label{est:u1}
\end{equation}

In the case of $u_{\lambda,2}$ we assume, without loss of generality, that $\ul'(t_{0})>0$, and we define $t_{1}$ as the smallest real number $t\geq t_{0}$ such that $u_{\lambda,2}'(t)=0$. In the interval $[t_{0},t_{1})$ we know that $u_{\lambda,2}'(t)>0$, and hence also $u_{\lambda,2}(t)>0$. In particular, from (\ref{ode:eqn}) we obtain that
\begin{equation}
u_{\lambda,2}''(t)+b(t)u_{\lambda,2}'(t)=-\lambda^{2}u_{\lambda,2}(t)\leq 0
\qquad
\forall t\in[t_{0},t_{1}].
\nonumber
\end{equation}

Integrating this differential inequality we deduce that
\begin{equation}
0\leq u_{\lambda,2}'(t)\leq \ul'(t_{0})\exp\left(-\int_{t_{0}}^{t}b(s)\,ds\right)
\qquad
\forall t\in[t_{0},t_{1}].
\nonumber
\end{equation}

Now from assumptions (\ref{hp:b1}) and (\ref{hp:b2}) we obtain that
\begin{equation}
-\int_{t_{0}}^{t}b(s)\,ds=
-\int_{t_{0}}^{t}b_{1}(s)\,ds-
\int_{t_{0}}^{t}b_{2}(s)\,ds\leq
m\log\left(\frac{t_{0}}{t}\right)+B,
\nonumber
\end{equation}
and therefore
\begin{equation}
0\leq u_{\lambda,2}'(t)\leq
%\ul'(t_{0})\exp\left(-m\int_{t_{0}}^{t}\frac{1}{s}\,ds\right)\leq
\ul'(t_{0})\left(\frac{t_{0}}{t}\right)^{m}e^{B}
\qquad
\forall t\in[t_{0},t_{1}].
\label{est:u2'}
\end{equation}

Recalling that $u_{\lambda,2}(0)=0$, this implies also that
\begin{equation}
0\leq
u_{\lambda,2}(t)=
\int_{t_{0}}^{t}u_{\lambda,2}'(s)\,ds\leq
\ul'(t_{0})\gamma(m,t_{0},t)e^{B}
\qquad
\forall t\in[t_{0},t_{1}].
\label{est:u2}
\end{equation}

Plugging (\ref{est:u2'}), (\ref{est:u2}) and (\ref{est:u1}) into (\ref{est:u-u1-u2}) we obtain (\ref{th:P-before}) for every $t\in[t_{0},t_{1}]$.

For $t\geq t_{1}$ we consider the energy $E_{2}(t)$, and from (\ref{est:Ei'}) and (\ref{est:u2}) with $t=t_{1}$ we conclude that
\begin{equation}
u_{\lambda,2}'(t)^{2}+\lambda^{2}u_{\lambda,2}(t)^{2}=
E_{2}(t)\leq
E_{2}(t_{1})=
\lambda^{2}u_{\lambda,2}(t_{1})^{2}\leq
\lambda^{2}e^{2B}\ul'(t_{0})^{2}\gamma(m,t_{0},t_{1})^{2}
\nonumber
\end{equation}
for every $t\geq t_{1}$. Plugging this inequality and (\ref{est:u1}) into (\ref{est:u-u1-u2}) we obtain (\ref{th:P-before}) for every $t\geq t_{1}$.
\end{proof}

%\clearpage

\subsection{Estimates in the ``hyperbolic'' regime}

As announced in the introduction, the key tool is the polar representation of solutions to (\ref{ode:eqn}), which can be stated as follows (we omit the standard proof).

\begin{lemma}[Polar representation of solutions]\label{lemma:polar}

Let $t_{0}$ be a positive real number, and let $b:[t_{0},+\infty)\to\re$ be a continuous function.

Then every solution to equation (\ref{ode:eqn}) has the following properties.
\begin{enumerate}
\renewcommand{\labelenumi}{(\arabic{enumi})}

\item  The pair $(\ul(t),\ul'(t))$ can be written in the form (\ref{defn:polar}), where $\rhol:[t_{0},+\infty)\to(0,+\infty)$ and $\thetal:[t_{0},+\infty)\to\re$ are solutions to the system of ordinary differential equations (\ref{eqn:rho})--(\ref{eqn:theta}).

\item   The function $\thetal(t)$ can be written in the form 
\begin{equation}
\thetal(t)=\lambda t+\hl(t)
\label{eqn:theta-h}
\end{equation}
for a suitable function $\hl:[t_{0},+\infty)\to\re$ of class $C^1$ such that
\begin{equation}
|\hl'(t)|\leq \frac{1}{2}|b(t)|
\qquad
\forall t\geq t_{0}.
%\label{est:h'(t)}
\nonumber
\end{equation}

\item  The energy of the solution, namely the quantity (\ref{defn:rho}), is given by (\ref{est:rho2}).

\end{enumerate}

\end{lemma}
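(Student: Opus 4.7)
The approach is the classical polar-coordinate trick for linear second-order ODEs, and my plan splits into four steps. First, assuming that $\ul$ is nontrivial (otherwise everything is trivial), I set $\rhol(t) := \sqrt{\ul'(t)^2 + \lambda^2 \ul(t)^2}$ and verify that $\rhol > 0$ everywhere: otherwise both $\ul(t_*)$ and $\ul'(t_*)$ would vanish at some $t_*$, and uniqueness for the linear equation (\ref{ode:eqn}) would force $\ul \equiv 0$. Since $(\lambda \ul, -\ul')/\rhol$ is then a $C^1$ curve on the unit circle, the classical continuous lifting theorem produces a $C^1$ function $\thetal$ satisfying $\cos\thetal = \lambda \ul/\rhol$ and $\sin\thetal = -\ul'/\rhol$, which is exactly the polar representation (\ref{defn:polar}).

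Next, to derive the system (\ref{eqn:rho})--(\ref{eqn:theta}), I differentiate the two polar identities $\lambda \ul = \rhol \cos\thetal$ and $\ul' = -\rhol \sin\thetal$, then plug in $\ul' = -\rhol \sin\thetal$ from the polar form into the derivative of the first, and $\ul'' = -b(t) \ul' - \lambda^2 \ul$ from (\ref{ode:eqn}) into the derivative of the second. This yields a $2\times 2$ linear algebraic system for the unknowns $\rhol'$ and $\rhol \thetal'$ with coefficient matrix built from $\cos\thetal$ and $\sin\thetal$, whose determinant equals $-1$ by the Pythagorean identity. Inverting this system and dividing the second equation by the strictly positive $\rhol$ gives $\rhol' = -b(t)\rhol \sin^2\thetal$ and $\thetal' = \lambda - \tfrac{1}{2}b(t)\sin(2\thetal)$, which are precisely (\ref{eqn:rho}) and (\ref{eqn:theta}).

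Statements (2) and (3) are then immediate. For (2), setting $\hl(t) := \thetal(t) - \lambda t$ and using (\ref{eqn:theta}) gives $\hl'(t) = -\tfrac{1}{2} b(t) \sin(2\thetal(t))$, so $|\hl'(t)| \leq \tfrac{1}{2} |b(t)|$. For (3), the ODE (\ref{eqn:rho}) is separable, and since $\rhol > 0$ I can divide by $\rhol$ and integrate from $t_0$ to $t$ to obtain (\ref{est:rho2}).

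The main (and only) potential obstacle I anticipate is the naive approach of deriving the $\thetal$ equation by differentiating just one of $\sin\thetal$ or $\cos\thetal$ and then dividing, which forces an awkward case distinction at points where the chosen denominator vanishes. I avoid this by differentiating both polar identities simultaneously and exploiting $\sin^2 + \cos^2 = 1$ to keep the algebraic system invertible everywhere, so no division by a possibly vanishing quantity is ever needed.
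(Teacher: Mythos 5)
Your proof is correct and is exactly the standard argument that the paper itself declines to spell out (``we omit the standard proof''). The positivity of $\rhol$ via uniqueness, the $C^1$ lifting to get $\thetal$, the invertibility-of-the-rotation-matrix trick to avoid dividing by a possibly vanishing $\sin\thetal$ or $\cos\thetal$, and the separable integration for (\ref{est:rho2}) are all handled cleanly.
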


%\clearpage

\begin{prop}[``Hyperbolic'' regime -- General oscillations]\label{prop:hyperbolic}

Let $t_{0}$ be a positive real number, and let $b:[t_{0},+\infty)\to\re$ be a measurable function that satisfies (\ref{hp:bound-m1m2}) for suitable constants $M\geq m>0$.

Then for every $\lambda>0$ all solutions to equation (\ref{ode:eqn}) satisfy the decay estimate
\begin{equation}
\ul'(t)^{2}+\lambda^{2}\ul(t)^{2}\leq
\exp\left(\frac{m(M+8)}{\lambda t_{0}}\right)
\left(\ul'(t_{0})^{2}+\lambda^{2}\ul(t_{0})^{2}\right)\left(\frac{t_{0}}{t}\right)^{m}
\qquad
\forall t\geq t_{0}.
\label{th:hyperbolic}
\end{equation}

\end{prop}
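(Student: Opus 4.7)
The plan is to use the polar representation of solutions from Lemma~\ref{lemma:polar}, which reduces the claim to an upper bound on the exponent appearing in formula~(\ref{est:rho2}). Since $\rhol(t)^{2}=\ul'(t)^{2}+\lambda^{2}\ul(t)^{2}$, the inequality~(\ref{th:hyperbolic}) is equivalent to proving that
\begin{equation}
-2\int_{t_{0}}^{t}b(s)\sin^{2}(\thetal(s))\,ds\leq\frac{m(M+8)}{\lambda t_{0}}+m\log\left(\frac{t_{0}}{t}\right)
\qquad\forall t\geq t_{0}.
\nonumber
\end{equation}

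The first step is to apply the identity $\sin^{2}(\thetal(s))=\tfrac{1}{2}-\tfrac{1}{2}\cos(2\thetal(s))$ together with the decomposition $\thetal(s)=\lambda s+\hl(s)$ from Lemma~\ref{lemma:polar}, so that the exponent splits as
\begin{equation}
-\int_{t_{0}}^{t}b(s)\,ds+\int_{t_{0}}^{t}b(s)\cos(2\lambda s+2\hl(s))\,ds.
\nonumber
\end{equation}
The first integral is easy to control from below using $b(s)\geq m/s$, which gives $-\int_{t_{0}}^{t}b(s)\,ds\leq m\log(t_{0}/t)$. The delicate point is the oscillating integral; here the rough $b$ might not be amenable to direct integration by parts, so the idea is to split $b(s)=m/s+(b(s)-m/s)$ and treat the two pieces separately.

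For the ``regular'' piece $m/s$, I would invoke Lemma~\ref{lemma:osc-int} with $n=2$, $h=\hl$, and $H_{0}=M/2$ (which is admissible because part~(2) of Lemma~\ref{lemma:polar} gives $|\hl'|\leq |b|/2\leq M/(2s)$). This yields
\begin{equation}
\left|\,m\int_{t_{0}}^{t}\frac{\cos(2\lambda s+2\hl(s))}{s}\,ds\,\right|\leq\frac{m(M+8)}{\lambda t_{0}}.
\nonumber
\end{equation}
For the ``rough'' remainder $b(s)-m/s$, which is nonnegative but potentially wildly oscillating, the only available tool is the trivial bound by absolute value, giving a contribution at most $\int_{t_{0}}^{t}(b(s)-m/s)\,ds=\int_{t_{0}}^{t}b(s)\,ds-m\log(t/t_{0})$.

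The main point of the argument is that this last term cancels exactly with the $-\int_{t_{0}}^{t}b(s)\,ds$ coming from the non-oscillating part, leaving only the bounded contribution $m(M+8)/(\lambda t_{0})$ plus the desired polynomial factor $m\log(t_{0}/t)$. This cancellation is what allows us to handle an arbitrary measurable $b$ lying between $m/t$ and $M/t$, without any regularity or monotonicity assumption. The main obstacle I anticipate is a mild technical issue, namely that Lemma~\ref{lemma:polar} is stated for continuous $b$, whereas Proposition~\ref{prop:hyperbolic} allows merely measurable $b$; this should be addressed by a routine approximation (the estimate is stable under $L^{1}_{\mathrm{loc}}$ convergence of $b$), so that the bound first obtained for continuous damping passes to the measurable case.
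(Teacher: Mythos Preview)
Your proof is correct and uses the same essential ingredients as the paper (polar representation, the bound $|\hl'|\leq M/(2t)$, Lemma~\ref{lemma:osc-int} with $n=2$ and $H_{0}=M/2$, and approximation by continuous $b$). The only difference is a small detour: you first expand $-2b(s)\sin^{2}(\thetal(s))=-b(s)+b(s)\cos(2\thetal(s))$, then split the oscillating term as $(m/s)\cos(2\thetal)+(b(s)-m/s)\cos(2\thetal)$, bound the rough remainder trivially, and rely on its exact cancellation against $-\int b$. The paper goes more directly: since $\sin^{2}(\thetal(s))\geq 0$ and $b(s)\geq m/s$, one has at once
\[
-2\int_{t_{0}}^{t} b(s)\sin^{2}(\thetal(s))\,ds\leq -2\int_{t_{0}}^{t} \frac{m}{s}\sin^{2}(\thetal(s))\,ds
= -\int_{t_{0}}^{t}\frac{m}{s}\,ds+\int_{t_{0}}^{t}\frac{m\cos(2\thetal(s))}{s}\,ds,
\]
so the rough part of $b$ never enters the oscillating integral and no split-and-cancel step is needed. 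Both routes yield the same constant; your remark on the approximation of measurable $b$ by continuous ones is exactly how the paper handles that point as well.
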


\begin{proof}

With a classical approximation procedure, we can assume that the damping coefficient is continuous. In this case we write $\ul(t)$ and $\ul'(t)$ as in (\ref{defn:polar}), and we reduce ourselves to estimating from above the exponential in (\ref{est:rho2}). 

To this end, from the bound from below in (\ref{hp:bound-m1m2}) we deduce that
\begin{equation*}
-\int_{t_{0}}^{t}2b(s)\sin^{2}(\thetal(s))\,ds\leq
-\int_{t_{0}}^{t}\frac{2m\sin^{2}(\thetal(s))}{s}\,ds=
-\int_{t_{0}}^{t}\frac{m}{s}\,ds+
\int_{t_{0}}^{t}\frac{m\cos(2\thetal(s))}{s}\,ds.
\end{equation*}

In order to estimate the last integral, from statement~(2) of Lemma~\ref{lemma:polar} we know that $\thetal(t)$ can be written in the form (\ref{eqn:theta-h}) for a suitable $C^1$ function $\hl(t)$ that in this case satisfies
\begin{equation}
|\hl'(t)|\leq\frac{M}{2t}
\qquad
\forall t\geq t_{0}.
\nonumber
%\label{est:h'(t)}
\end{equation}
because of the bound from above in (\ref{hp:bound-m1m2}). Therefore, the integral fits into the framework of Lemma~\ref{lemma:osc-int} with $H_{0}:=M/2$ and $n=2$, from which we conclude that
\begin{equation}
-\int_{t_{0}}^{t}2b(s)\sin^{2}(\thetal(s))\,ds\leq
m\log\left(\frac{t_{0}}{t}\right)+\frac{m(M+8)}{\lambda t_{0}}.
\nonumber
\end{equation}

Plugging this estimate into (\ref{est:rho2}), and recalling (\ref{defn:rho}), we obtain exactly (\ref{th:hyperbolic}).
\end{proof}

%\clearpage

\begin{prop}[``Hyperbolic'' regime -- Fast oscillations]\label{prop:hyp-alpha}

Let $t_{0}$ be a positive real number, and let $b:[t_{0},+\infty)\to\re$ be the damping coefficient defined by (\ref{defn:b-hyp-alpha}) for suitable parameters $a$, $r$, $\alpha$ satisfying (\ref{hp:a-b-alpha}). 
%Let $\Gamma$ be the constant defined by (\ref{defn:Gamma}).

Then for every $\lambda>0$ all solutions to equation (\ref{ode:eqn}) satisfy the decay estimate
\begin{equation}
\ul'(t)^{2}+\lambda^{2}\ul(t)^{2}\leq
\Gamma_{4}\left(\ul'(t_{0})^{2}+\lambda^{2}\ul(t_{0})^{2}\right)\left(\frac{t_{0}}{t}\right)^{a}
\qquad
\forall t\geq t_{0},
\label{th:hyp-alpha}
\end{equation}
where
\begin{equation}
\Gamma_{4}:=\exp\left(\frac{2a(a+r+8)+5r(a+r+4)}{2\lambda t_{0}}+\frac{3r}{\alpha t_{0}^{\alpha}}+\frac{r\log 3}{\alpha-1}\right).
\label{defn:Gamma-bis}
\end{equation}

\end{prop}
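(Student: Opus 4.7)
The plan is to mirror the proof of Proposition~\ref{prop:hyperbolic}, but this time splitting the damping coefficient into its constant part $a/s$ and its oscillating part $r\sin(s^{\alpha})/s$. As before, I would use Lemma~\ref{lemma:polar} to write $(\ul(t),\ul'(t))$ in polar form and reduce the task to bounding from above the argument of the exponential in (\ref{est:rho2}). Using the identity $2\sin^{2}(\thetal(s))=1-\cos(2\thetal(s))$ and the decomposition of $b(s)$, the exponent splits into four pieces:
\begin{equation*}
-2\int_{t_{0}}^{t}b(s)\sin^{2}(\thetal(s))\,ds
= a\log\!\left(\frac{t_{0}}{t}\right)+J_{1}(t)-rJ_{2}(t)+rJ_{3}(t),
\end{equation*}
where $J_{1}$ contains $a\cos(2\thetal(s))/s$, $J_{2}$ contains $\sin(s^{\alpha})/s$, and $J_{3}$ contains $\sin(s^{\alpha})\cos(2\thetal(s))/s$. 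The first logarithmic term is exactly what produces the factor $(t_{0}/t)^{a}$ in (\ref{th:hyp-alpha}), so the whole job is to bound each $J_{i}$ by an absolute constant.

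For $J_{1}$ I would invoke Lemma~\ref{lemma:osc-int} with $n=2$ and $h=\hl$. From statement~(2) of Lemma~\ref{lemma:polar} and the bound $|b(t)|\leq (a+r)/t$ we obtain $|\hl'(t)|\leq (a+r)/(2t)$, so we may take $H_{0}=(a+r)/2$; the lemma then yields $|J_{1}(t)|\leq a(a+r+8)/(\lambda t_{0})$. For $J_{2}$ I would integrate by parts by writing $\sin(s^{\alpha})/s=\alpha s^{\alpha-1}\sin(s^{\alpha})\cdot 1/(\alpha s^{\alpha})$, which produces the boundary term $[-\cos(s^{\alpha})/(\alpha s^{\alpha})]_{t_{0}}^{t}$ and an absolutely convergent tail $\int_{t_{0}}^{t}\cos(s^{\alpha})/s^{\alpha+1}\,ds$; both are controlled by constants of order $1/(\alpha t_{0}^{\alpha})$, giving $|J_{2}(t)|\leq 3/(\alpha t_{0}^{\alpha})$. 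For $J_{3}$ I would use Lemma~\ref{lemma:s-alpha} with the same choice $h=\hl$ and $H_{0}=(a+r)/2$, which delivers the bound $|J_{3}(t)|\leq 5(a+r+4)/(2\lambda t_{0})+(\log 3)/(\alpha-1)$.

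Summing the four contributions, inserting into (\ref{est:rho2}) and recalling (\ref{defn:rho}) yields precisely the constant $\Gamma_{4}$ defined in (\ref{defn:Gamma-bis}) and the estimate (\ref{th:hyp-alpha}). Since $b(t)$ as given by (\ref{defn:b-hyp-alpha}) is already smooth, the preliminary approximation used in the proof of Proposition~\ref{prop:hyperbolic} is not even needed here. The only genuinely delicate point is the application of Lemma~\ref{lemma:s-alpha}, because the phase $s^{\alpha}-2\lambda s$ is not monotone: its derivative changes sign near $s$ with $\alpha s^{\alpha-1}\sim 2\lambda$. However, this obstacle is already absorbed inside Lemma~\ref{lemma:s-alpha}, where the interval $[t_{1},t_{2}]$ on which the phase is not separated from zero contributes the logarithmic term $(\log 3)/(\alpha-1)$; so all that remains to check is that the hypotheses match, which they do by our choice of $H_{0}$.
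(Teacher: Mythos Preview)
Your proposal is correct and follows essentially the same route as the paper's proof: the same polar representation via Lemma~\ref{lemma:polar}, the same four-term decomposition of the exponent (your $J_{1},J_{2},J_{3}$ are precisely the paper's $I_{3},I_{2}/(-r),I_{4}/r$), and the same applications of Lemma~\ref{lemma:osc-int} and Lemma~\ref{lemma:s-alpha} with $H_{0}=(a+r)/2$ yielding the same constants. Your remark that the smoothness of $b(t)$ makes the approximation step unnecessary is a nice observation that the paper does not state explicitly.
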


\begin{proof}

As in the proof of Proposition~\ref{prop:hyperbolic} we write the solution in the form (\ref{defn:polar}), and we reduce ourselves to estimating from above the exponential in (\ref{est:rho2}). Moreover, again we obtain that $\thetal(t)$ can be written in the form (\ref{eqn:theta-h}) with $\hl(t)$ that in this case satisfies
\begin{equation}
|\hl'(t)|\leq\frac{a+r}{2t}
\qquad
\forall t\geq t_{0}.
\nonumber
\end{equation}

Now we observe that
\begin{equation}
-2\int_{t_{0}}^{t}b(s)\sin^{2}(\thetal(s))\,ds=
I_{1}(t)+I_{2}(t)+I_{3}(t)+I_{4}(4),
\label{defn:I1234}
\end{equation}
where
\begin{gather*}
I_{1}(t):=-\int_{t_{0}}^{t}\frac{a}{s}\,ds=a\log\left(\frac{t_{0}}{t}\right),
\qquad
I_{2}(t):=-r\int_{t_{0}}^{t}\frac{\sin(s^{\alpha})}{s}\,ds,
\\[1ex]
I_{3}(t):=a\int_{t_{0}}^{t}\frac{\cos(2\thetal(s))}{s}\,ds,
\qquad
I_{4}(t):=r\int_{t_{0}}^{t}\frac{\sin(s^{\alpha})\cos(2\thetal(s))}{s}\,ds.
\end{gather*}

Let us estimate the last three integrals. As for $I_{2}$, a classical integration by parts shows that
\begin{equation}
\int_{t_{0}}^{t}\frac{\sin(s^{\alpha})}{s}\,ds=
\frac{\cos(t_{0}^{\alpha})}{\alpha t_{0}^{\alpha}}-\frac{\cos(t^{\alpha})}{\alpha t^{\alpha}}-
\int_{t_{0}}^{t}\frac{\cos(s^{\alpha})}{s^{\alpha+1}}\,ds,
\nonumber
\end{equation}
from which we deduce that
\begin{equation}
|I_{2}(t)|\leq\frac{3r}{\alpha t_{0}^{\alpha}}
\qquad
\forall t\geq t_{0}.
\label{est:pre-b2}
\end{equation}

As for $I_{3}$, we apply Lemma~\ref{lemma:osc-int} with $H_{0}:=(a+r)/2$ and $n=2$, and we deduce that
\begin{equation}
|I_{3}(t)|\leq\frac{a(a+r+8)}{\lambda t_{0}}.
\nonumber
\end{equation}

As for $I_{4}$, we apply Lemma~\ref{lemma:s-alpha} with $H_{0}:=(a+r)/2$, and we deduce that
\begin{equation}
|I_{4}(t)|\leq
r\left(\frac{5(a+r+4)}{2\lambda t_{0}}+\frac{\log 3}{\alpha-1}\right).
\nonumber
\end{equation}

Plugging all these estimates into (\ref{defn:I1234}), and recalling (\ref{est:rho2}) and (\ref{defn:rho}), we obtain exactly (\ref{th:hyp-alpha}).
\end{proof}

%\clearpage

%\clearpage

%\clearpage

\subsection{Proof of Proposition~\ref{prop:main}}

\subsubsection*{Statement~(1)}

If $\lambda=0$ equation (\ref{ode:eqn}) can be explicitly integrated, and the result follows from the explicit formula for solutions. Therefore, in the sequel we assume that $\lambda$ is positive.

If $\lambda\geq 1/t_{0}$ we apply Proposition~\ref{prop:hyperbolic}, and from (\ref{th:hyperbolic}) we obtain that
\begin{equation}
\ul'(t)^{2}+\lambda^{2}\ul(t)^{2}\leq
e^{m(M+8)}\left\{\ul'(t_{0})^{2}+\lambda^{2}\ul(t_{0})^{2}\right\}\left(\frac{t_{0}}{t}\right)^{m}
\nonumber
\end{equation}
for every $t\geq t_{0}$, which is enough to establish (\ref{th:prop-gen}) in this case.

If $\lambda<1/t_{0}$ we start by applying Proposition~\ref{prop:parabolic} with
\begin{equation}
b_{1}(t):=b(t),
\qquad\qquad
b_{2}(t)\equiv 0,
\qquad\qquad
B:=0.
\nonumber
\end{equation}

To this end we divide the half-line $t\geq t_{0}$ into the three subsets
\begin{equation}
\left[t_{0},\min\left\{t_{1},\frac{1}{\lambda}\right\}\right],
\qquad\quad
\left[\min\left\{t_{1},\frac{1}{\lambda}\right\},\frac{1}{\lambda}\right],
\qquad\quad
\left[\frac{1}{\lambda},+\infty\right),
\label{division}
\end{equation}
where $t_{1}$ is the time provided by Proposition~\ref{prop:parabolic}.

In the first interval it turns out that $t_{0}\leq t\leq t_{1}$, and hence we can exploit estimate (\ref{th:P-before}), from which we obtain that
\begin{equation}
\ul'(t)^{2}+\lambda^{2}\ul(t)^{2}\leq
2\lambda^{2}\ul(t_{0})^{2}+2\ul'(t_{0})^{2}
\left\{\left(\frac{t_{0}}{t}\right)^{2m}+\lambda^{2}\gamma(m,t_{0},t)^{2}\right\}.
\nonumber
\end{equation}

Now in this first interval we know that $t\leq 1/\lambda$, namely $\lambda\leq 1/t$, and hence
\begin{equation}
\ul'(t)^{2}+\lambda^{2}\ul(t)^{2}\leq
\frac{2}{t_{0}^{2}}\left(\frac{t_{0}}{t}\right)^{2}\ul(t_{0})^{2}
+2\ul'(t_{0})^{2}\left(\frac{t_{0}}{t}\right)^{2m}
+\frac{2}{t_{0}^{2}}\ul'(t_{0})^{2}\left(\frac{t_{0}}{t}\right)^{2}\gamma(m,t_{0},t)^{2}.
\nonumber
\end{equation}

Recalling (\ref{th:gamma}), this implies that
\begin{equation}
\ul'(t)^{2}+\lambda^{2}\ul(t)^{2}\leq
\left\{\frac{2}{t_{0}^{2}}\ul(t_{0})^{2}+4\ul'(t_{0})^{2}\right\}\left(\frac{t_{0}}{t}\right)^{\mu},
%\qquad
%\forall t\in\left[\min\left\{t_{1},\frac{1}{\lambda}\right\},\frac{1}{\lambda}\right],
\nonumber
\end{equation}
which is enough to establish (\ref{th:prop-gen}) in the first time-interval.

Let us consider now the second interval, in the case where it is non-degenerate, namely $t_{1}<1/\lambda$. In this case we can exploit estimate (\ref{th:P-after}), from which we deduce that
\begin{eqnarray*}
\ul'(t)^{2}+\lambda^{2}\ul(t)^{2} & \leq &
2\lambda^{2}\ul(t_{0})^{2}+2\ul'(t_{0})^{2}\lambda^{2}\gamma(m,t_{0},t_{1})^{2}
\\[0.5ex]
& \leq &
2\lambda^{2}\ul(t_{0})^{2}+2\ul'(t_{0})^{2}\lambda^{2}\gamma(m,t_{0},t)^{2}.
\end{eqnarray*}

Since also in this interval we know that $t\leq 1/\lambda$, namely $\lambda\leq 1/t$, recalling (\ref{th:gamma}) we deduce that
\begin{eqnarray*}
\ul'(t)^{2}+\lambda^{2}\ul(t)^{2} & \leq &
\frac{2}{t_{0}^{2}}\left(\frac{t_{0}}{t}\right)^{2}\ul(t_{0})^{2}+
2\ul'(t_{0})^{2}\frac{1}{t_{0}^{2}}\left(\frac{t_{0}}{t}\right)^{2}\gamma(m,t_{0},t)^{2}
\\[0.5ex]
& \leq &
\frac{2}{t_{0}^{2}}\left(\frac{t_{0}}{t}\right)^{2}\ul(t_{0})^{2}+
2\ul'(t_{0})^{2}\left(\frac{t_{0}}{t}\right)^{\mu}
\\[0.5ex]
& \leq &
\left\{\frac{2}{t_{0}^{2}}\ul(t_{0})^{2}+2\ul'(t_{0})^{2}\right\}\left(\frac{t_{0}}{t}\right)^{\mu},
\end{eqnarray*}
which proves (\ref{th:prop-gen}) also in the second time-interval. 

Finally, let us consider the half-line $t\geq 1/\lambda$. When $t=1/\lambda$ the last estimate tells us that
\begin{equation}
\ul'\left(\frac{1}{\lambda}\right)^{2}+\lambda^{2}\ul\left(\frac{1}{\lambda}\right)^{2}\leq
\left\{\frac{2}{t_{0}^{2}}\ul(t_{0})^{2}+2\ul'(t_{0})^{2}\right\}(\lambda t_{0})^{\mu}.
\nonumber
\end{equation}

For $t\geq 1/\lambda$ we apply again Proposition~\ref{prop:hyperbolic}, but now with initial time $1/\lambda$ instead of $t_{0}$, and from estimate (\ref{th:hyperbolic}) (with $1/\lambda$ instead of $t_{0}$) we deduce that
\begin{eqnarray*}
\ul'(t)^{2}+\lambda^{2}\ul(t)^{2} & \leq &
e^{m(M+8)}
\left\{\ul'\left(\frac{1}{\lambda}\right)^{2}+\lambda^{2}\ul\left(\frac{1}{\lambda}\right)^{2}\right\}
\left(\frac{1}{\lambda t}\right)^{m}
%\exp\left(m(M+8)\right)
\\[0.5ex]
& \leq &
e^{m(M+8)}
\left\{\frac{2}{t_{0}^{2}}\ul(t_{0})^{2}+2\ul'(t_{0})^{2}\right\}(\lambda t_{0})^{\mu}
\left(\frac{1}{\lambda t}\right)^{m}
%\exp\left(m(M+8)\right)
\\[0.5ex]
& \leq &
e^{m(M+8)}
\left\{\frac{2}{t_{0}^{2}}\ul(t_{0})^{2}+2\ul'(t_{0})^{2}\right\}
\left(\frac{t_{0}}{t}\right)^{\mu},
%\exp\left(m(M+8)\right),
\end{eqnarray*}
which proves (\ref{th:prop-gen}) also in the last half-line.
\qed

%\clearpage

\subsubsection*{Statement~(2)}

To begin with, we observe that the coefficient $b(t)$ defined by (\ref{defn:b-hyp-alpha}) satisfies both the assumptions of Proposition~\ref{prop:hyp-alpha}, and the assumption of Proposition~\ref{prop:parabolic} with
\begin{equation}
b_{1}(t):=\frac{a}{t},
\qquad\quad
b_{2}(t):=\frac{r\sin(t^{\alpha})}{t},
\qquad\quad
m:=a,
\qquad\quad
B:=\frac{3r}{\alpha t_{0}^{\alpha}}
\nonumber
\end{equation}
(the verification of assumption (\ref{hp:b2}) is the same elementary computation that leads to (\ref{est:pre-b2}) in the proof of Proposition~\ref{prop:hyp-alpha}).

From now on we proceed exactly as in the proof of statement~(1), with the only difference that now $B>0$. If $\lambda\geq 1/t_{0}$ we apply Proposition~\ref{prop:hyp-alpha}, and from (\ref{th:hyp-alpha}) we obtain that
\begin{equation}
\ul'(t)^{2}+\lambda^{2}\ul(t)^{2}\leq
\Gamma_{4}
\left\{\ul'(t_{0})^{2}+\lambda^{2}\ul(t_{0})^{2}\right\}\left(\frac{t_{0}}{t}\right)^{m}
\qquad
\forall t\geq t_{0},
\nonumber
\end{equation}
which implies (\ref{th:prop-alpha}) because when $\lambda\geq 1/t_{0}$ the constant $\Gamma_{4}$ defined by (\ref{defn:Gamma-bis}) is less than the constant $\Gamma_{2}$ defined by (\ref{defn:Gamma}).

If $\lambda<1/t_{0}$ we divide the half-line $t\geq t_{0}$ into the three subsets (\ref{division}). In the first one we obtain that 
\begin{equation}
\ul'(t)^{2}+\lambda^{2}\ul(t)^{2}\leq
\left\{\frac{2}{t_{0}^{2}}\ul(t_{0})^{2}+4e^{2B}\ul'(t_{0})^{2}\right\}\left(\frac{t_{0}}{t}\right)^{\mu},
%\qquad
%\forall t\in\left[\min\left\{t_{1},\frac{1}{\lambda}\right\},\frac{1}{\lambda}\right],
\nonumber
\end{equation}
which is enough to establish (\ref{th:prop-alpha}) in the first time-interval.

In the second interval we obtain that
\begin{equation}
\ul'(t)^{2}+\lambda^{2}\ul(t)^{2}\leq
\left\{\frac{2}{t_{0}^{2}}\ul(t_{0})^{2}+2e^{2B}\ul'(t_{0})^{2}\right\}\left(\frac{t_{0}}{t}\right)^{\mu},
\nonumber
\end{equation}
which proves (\ref{th:prop-alpha}) also in the second time-interval. 

Finally, in the half-line $t\geq 1/\lambda$ we apply again Proposition~\ref{prop:hyp-alpha}, but now with initial time $1/\lambda$ instead of $t_{0}$, and from estimate (\ref{th:hyp-alpha}) (with $1/\lambda$ instead of $t_{0}$) we deduce that
\begin{equation}
\ul'(t)^{2}+\lambda^{2}\ul(t)^{2}\leq
\Gamma_{2}
\left\{\frac{2}{t_{0}^{2}}\ul(t_{0})^{2}+2e^{2B}\ul'(t_{0})^{2}\right\}
\left(\frac{t_{0}}{t}\right)^{\mu},
%\Gamma,
\nonumber
\end{equation}
which proves (\ref{th:prop-alpha}) also in the last half-line.
\qed

%\clearpage

\subsubsection*{Statement~(3)}

\paragraph{\textmd{\textit{Definition of the damping coefficient}}}

Let $\etal:(0,+\infty)\to\re$ denote the solution to the ordinary differential equation
\begin{equation}
\etal'(t)=\lambda-\frac{a+r\cos(2\etal(t))}{2t}\sin(2\etal(t))
\qquad
\forall t>0,
\label{eqn:ode-eta}
\end{equation}
with ``initial'' condition
\begin{equation}
\etal(t_{0})=\frac{\pi}{2}.
\nonumber
\end{equation}

We claim that the conclusions hold true if we set
\begin{equation}
b(t):=\frac{a+r\cos(2\etal(t))}{t}
\qquad
\forall t>0.
\label{defn:b}
\end{equation}

To this end, we observe first that $\etal(t)$ can be written in the form
\begin{equation}
\etal(t)=\lambda t+\hl(t),
\label{eqn:eta-h}
\end{equation}
for a suitable function $\hl:(0,+\infty)\to\re$ that satisfies
\begin{equation}
|\hl'(t)|\leq\frac{a+r}{2t}
\qquad
\forall t>0.
\label{est:h'-eta}
\end{equation}

\paragraph{\textmd{\textit{Scale invariant behavior and integrability of oscillations}}}

The pointwise bounds (\ref{th:bound-b}) are automatic from definition (\ref{defn:b}). 

In order to prove (\ref{th:lim-B}) we observe that
\begin{equation}
\int_{t_{0}}^{t}b(s)\,ds=
a\log\left(\frac{t}{t_{0}}\right)+r\int_{t_{0}}^{t}\frac{\cos(2\etal(s))}{s}\,ds.
\nonumber
\end{equation}

Thanks to (\ref{eqn:eta-h}) and (\ref{est:h'-eta}), we can apply Lemma~\ref{lemma:osc-int} with $n=2$ and conclude that
\begin{equation}
\lim_{t\to +\infty}\left(\frac{t_{0}}{t}\right)^{a}\exp\left(\int_{t_{0}}^{t}b(s)\,ds\right)=
\lim_{t\to +\infty}\exp\left(r\int_{t_{0}}^{t}\frac{\cos(2\etal(s))}{s}\,ds\right)
\nonumber
\end{equation}
exists and is a positive real number. 

\paragraph{\textmd{\textit{Slower decay of one solution}}}

Let us consider the solution to equation (\ref{ode:eqn}) with initial data
\begin{equation}
\ul(t_{0})=0,
\qquad\qquad
\ul'(t_{0})=1,
\label{ode:data}
\end{equation}
and let us write it in the form (\ref{defn:polar}). In this way we reduce ourselves to estimating from below the exponential in (\ref{est:rho2}). To this end, we observe that now equation (\ref{eqn:theta}) for $\thetal$ reads as
\begin{equation}
\thetal'(t)=\lambda-\frac{1}{2}\frac{a+r\cos(2\etal(t))}{t}\sin(2\thetal(t)),
\qquad\quad
\thetal(t_{0})=\frac{\pi}{2}.
\nonumber
\end{equation}

Comparing with (\ref{eqn:ode-eta}), by uniqueness we deduce that $\thetal(t)=\etal(t)$ for every $t>0$. Now from (\ref{defn:b}) with some trigonometry we deduce that
\begin{equation}
-2b(s)\sin^{2}(\thetal(s))=
-\left(a-\frac{r}{2}\right)\frac{1}{s}+
(a-r)\frac{\cos(2\etal(s))}{s}+
\frac{r}{2}\cdot\frac{\cos(4\etal(s))}{s},
\nonumber
\end{equation}
and therefore 
\begin{multline*}
\qquad
-2\int_{t_{0}}^{t}b(s)\sin^{2}(\thetal(s))\,ds=
\left(a-\frac{r}{2}\right)\log\left(\frac{t_{0}}{t}\right)
\\[0.5ex]
+(a-r)\int_{t_{0}}^{t}\frac{\cos(2\etal(s))}{s}\,ds+
\frac{r}{2}\int_{t_{0}}^{t}\frac{\cos(4\etal(s))}{s}\,ds.
\qquad
\end{multline*}

Thanks again to (\ref{eqn:eta-h}) and (\ref{est:h'-eta}), we can apply Lemma~\ref{lemma:osc-int} with $n=2$ and $n=4$, and conclude from (\ref{th:osc-int-bound}) that the last two integrals are bounded from below (and also from above). This completes the proof of (\ref{th:resonance}).
\qed

%\clearpage

\setcounter{equation}{0}
\section{From ODEs to PDEs (proof of main results)}\label{sec:PDEs}

\subsubsection*{Proof of Theorem~\ref{thm:decay}}

The argument is rather standard. We identify $A$ with the multiplication operator by $\lambda(\xi)^{2}$ in $L^{2}(\mathcal{M},\mu)$, then for every $\xi\in\mathcal{M}$ we consider the generalized Fourier transform  $\ut(t,\xi):=[\mathscr{F}u(t)](\xi)$ of the solution to (\ref{eqn:main})--(\ref{eqn:data}), and we recall that for every $\xi\in M$ it is a solution to problem (\ref{eqn:uxi})--(\ref{data:xi}).

Now we apply statement~(1) of Proposition~\ref{prop:main} with $\lambda:=\lambda(\xi)$, and we deduce that
\begin{equation}
\ut\,'(t,\xi)^{2}+\lambda(\xi)^{2}\ut(t,\xi)^{2}\leq
e^{m(M+8)}\left\{4\ut\,'(t_{0},\xi)^{2}+\left(\lambda^{2}(\xi)+\frac{2}{t_{0}^{2}}\right)\ut(t_{0},\xi)^{2}\right\}
\left(\frac{t_{0}}{t}\right)^{\mu}
\nonumber
\end{equation}
for every $\xi\in\mathcal{M}$ and every $t\geq t_{0}$. Recalling (\ref{defn:norm-xi}) and (\ref{defn:A-alpha}), when we integrate with respect to $\xi$ we obtain exactly (\ref{th:main-decay}).
\qed

\subsubsection*{Proof of Theorem~\ref{thm:alpha}}

The argument is analogous to the proof of Theorem~\ref{thm:decay}, just with statement~(2) of Proposition~\ref{prop:main} instead of statement~(1).
\qed

\subsubsection*{Proof of Theorem~\ref{thm:resonance}}

For every pair of positive real numbers $\lambda$ and $s$ we consider the set
\begin{equation}
\mathcal{M}_{\lambda,s}:=\{\xi\in \mathcal{M}:|\lambda(\xi)-\lambda|\leq s\}.
\nonumber
\end{equation}

Since $A$ is not identically zero, there exists a positive real number $\lambda_{0}$ such that
\begin{equation}
\mu(\mathcal{M}_{\lambda_{0},s})>0
\qquad
\forall s>0,
\nonumber
\end{equation}
and we consider the damping coefficient $b(t)$ provided by statement~(3) of Proposition~\ref{prop:main} with $\lambda:=\lambda_{0}$. For every $\lambda>0$ we consider the solution to (\ref{ode:eqn}) with this choice of $b(t)$ and initial data (\ref{ode:data}).  
For every fixed $t\geq t_{0}$ we know from (\ref{th:resonance}) that
\begin{equation}
u_{\lambda_{0}}'(t)^{2}+\lambda_{0}^{2}u_{\lambda_{0}}(t)^{2}\geq
\Gamma_{3}\left(\frac{t_{0}}{t}\right)^{a-r/2},
\nonumber
\end{equation}
where $\Gamma_{3}$ is the constant that appears in statement~(3) of Proposition~\ref{prop:main}, and depends only on $t_{0}$, $a$, $r$, $\lambda_{0}$. Since solutions to (\ref{ode:eqn})--(\ref{ode:data}) depend continuously on $\lambda$, we deduce that there exists $s>0$ (that depends on $t$) such that
\begin{equation}
\ul'(t)^{2}+\lambda^{2}\ul(t)^{2}\geq
\frac{\Gamma_{3}}{2}\left(\frac{t_{0}}{t}\right)^{a-r/2}
\qquad
\forall\lambda\in[\lambda_{0}-s,\lambda_{0}+s],
\nonumber
\end{equation}
and in particular the solution to (\ref{eqn:uxi}) with initial data
\begin{equation}
\ut(t_{0},\xi)=0,
\qquad\qquad
\ut\,'(t_{0},\xi)=1
\nonumber
\end{equation}
satisfies
\begin{equation}
\ut\,'(t,\xi)^{2}+\lambda(\xi)^{2}\ut(t,\xi)^{2}\geq
\frac{\Gamma_{3}}{2}\left(\frac{t_{0}}{t}\right)^{a-r/2}
\qquad
\forall\xi\in\mathcal{M}_{\lambda_{0},s}.
\nonumber
\end{equation}

At this point we can set
\begin{equation}
\ut_{1}(\xi):=\begin{cases}
\dfrac{1}{\mu(\mathcal{M}_{\lambda,s})}      & \text{if }\xi\in\mathcal{M}_{\lambda,s}, 
\\[2ex]
0      & \text{otherwise},
\end{cases}
\nonumber
\end{equation}
and conclude that the solution to (\ref{eqn:main}) with initial data $u(t_{0})=0$ and $u'(t_{0})=\mathscr{F}^{-1}\ut_{1}$ satisfies
\begin{equation}
|u'(t_{0})|^{2}+|A^{1/2}u(t_{0})|^{2}=1
\qquad\text{and}\qquad
|u'(t)|^{2}+|A^{1/2}u(t)|^{2}\geq
\frac{\Gamma_{3}}{2}\left(\frac{t_{0}}{t}\right)^{a-r/2}.
\nonumber
\end{equation}

This is enough to conclude that (\ref{th:bad-decay}) holds true for this fixed value of $t$.
\qed

%\clearpage

\subsubsection*{\centering Acknowledgments}

Both authors are members of the Italian {\selectlanguage{italian}% 
``Gruppo Nazionale per l'Analisi Matematica, la Probabilit\`{a} e le loro Applicazioni'' (GNAMPA) of the ``Istituto Nazionale di Alta Matematica'' (INdAM)}. The authors acknowledge also  the MIUR Excellence Department Project awarded to the Department of Mathematics, University of Pisa, CUP I57G22000700001. The first author was partially supported by PRIN 2020XB3EFL, ``Hamiltonian and Dispersive PDEs''.

\selectlanguage{english}

%%%\clearpage

%{\small 
% \bibliographystyle{../../../BibTeX/MaxNew}
% \bibliography{../../../BibTeX/Damping}
% \bibliographystyle{MaxNew}
% \bibliography{Damping}
%}

\label{NumeroPagine}

\end{document}